\pgfplotsset{compat=1.11}
\pgfplotsset{width=7.5cm,compat=1.16}
\theoremstyle{definition}
\newtheorem{theorem}{Theorem}[section] 
\newtheorem{definition}[theorem]{Definition}
\newtheorem{corollary}[theorem]{Corollary}
\newtheorem{observation}[theorem]{Observation}
\newtheorem{proposition}[theorem]{Proposition}
\newtheorem{example}[theorem]{Example}
\newtheorem{remark}[theorem]{Remark}
\newcommand{\inte}[1]{\boldsymbol{#1}}
\newcommand{\lo}[1]{\underline{#1}}
\newcommand{\hi}[1]{\overline{#1}}
\newcommand{\iunit}[1]{\textcolor{red}{#1}}
\title{Exact bounds on the amplitude and phase of the interval discrete Fourier transform in polynomial time}
\author{Marco de Angelis}
\date{December 17, 2021}
\begin{document}

\maketitle

\begin{abstract}
We elucidate why an interval algorithm that computes the exact bounds on the amplitude and phase of the discrete Fourier transform can run in polynomial time.  We address this question from a formal perspective to provide the mathematical foundations underpinning such an algorithm. We show that the procedure set out by the algorithm fully addresses the dependency problem of interval arithmetic, making it usable in a variety of applications involving the discrete Fourier transform. 

For example when analysing signals with poor precision, signals with missing data, and for automatic error propagation and verified computations.

\end{abstract}

\vspace{1em}
\noindent
Keywords:\ Interval mathematics, Dependency tracking, Discrete Fourier transform.

\section{Introduction}\label{sec:intro}
In a recent work, an algorithm that computes the exact bounds on the amplitude of the discrete Fourier transform (DFT) in polynomial time was presented \cite{intervalfourier2020}. In this paper, the focus is shifted towards the mathematical foundations underpinning the interval algorithm presented in the above work.

The paper is structured as follows: first, we introduce the mathematical expression of the Fourier transform under study, see §\ref{sec:dft}.  In §\ref{sec:theory} propaedeutic theoretical foundations of interval analysis are given, in §\ref{sec:extensions} the interval Fourier transform is introduced, in §\ref{sec:reach} the main results are presented, followed by a discussion section in §\ref{sec:discussion} .

\subsection*{Notation}

The standardized notation of interval analysis is adopted throughout the manuscript \cite{kearfott2005standardized}.

\subsection{The discrete Fourier transform} \label{sec:dft}
Let $x=(x_0,\ldots, x_{N-1}) \in \mathbb{R}^N$ be a vector, denote $x_n$ the $(n+1)$-th coordinate of the vector $x$.  We study the discrete Fourier transform 
of  $x$ which is given by the algebraic expression:

\begin{equation}\label{eq:algebraic_dft}
\mathsf{f}_h(x) := \sum_{n=0}^{N-1} x_n \  w_{hn} 
\end{equation}
%e^{-\textcolor{red}{i} \frac{2 \pi}{N} h n}
\noindent
where $w_{hn} :=e^{-\textcolor{red}{i} \frac{2 \pi}{N} h n}$ is the so-called Fourier coefficient with $\textcolor{red}{i}=\sqrt{-1}$, $w_{hn} \in \mathbb{C}$, and $h=0,...,N-1$  a positive integer often called a harmonic or frequency number. Since $n,h=0,\ldots, N-1$, the Fourier coefficients form a matrix  of size $N\times N$  called $ W$, with elements $w_{hn}:=W[h,n]$. 

\begin{figure}[h!]
\centering
\begin{tikzpicture}[cell/.style={rectangle,draw=black}, space/.style={minimum height=0.6em, minimum width=1em,matrix of nodes,row sep=-\pgflinewidth,column sep=-\pgflinewidth, column 1/.style={font=\ttfamily}},text depth=0.5ex,text height=2ex,nodes in empty cells]

%\node[] (repeat) at (-2,4.9) {{\small $\forall \ j  = 1, \cdots, N_e $}}; 

%\node[] (W) at (-5,-0.5) {{ $ W = $}};
\node[] (wh) at (5,-0.7) {{ $ w_{h} $}};

\node[] (W) at (-5,-0.25) {$W=$};

\draw[gray, very thin, fill=gray!2!] (-3.4,-2.5) rectangle (4.5,1.6);
\draw[black, thick,] (-3.5,-1) rectangle (4.6,-0.4);
%\node[] (dimA) at (-2.,1.75) {{\small $n_a \times N_a \times N_e$}};
	\matrix (Amatrix) [font=\small,space] at (0,0)
	{
	     							             & \tiny $0$ & \tiny $1$ & $ \cdots$  & \tiny $n$ & $\cdots$ & \tiny $N-1$\\
	  {\tiny $0$}              & $w_{00}$ & $w_{01}$ & $ \cdots$  & $w_{0n}$ & $\cdots$ & $w_{0(N-1)}$  \\
	   {\tiny $1$}              & $w_{10}$ & $w_{11}$ & $ \cdots$  & $w_{1n}$ & $\cdots$ & $w_{1(N-1)}$  \\
	  {\tiny $\vdots$} &   &   & {\tiny $\vdots$}   &  &  \\
	  {\tiny $h$}          & $w_{h0}$ & $w_{h1}$ &  $ \cdots$ & $w_{hn}$ & $ \cdots$ & $w_{h(N-1)}$  \\
	  {\tiny $\vdots$} &   &  &  {\tiny $\vdots$}  &   &    \\
	  {\tiny $N-1$}      & $w_{(N-1)0}$ & $w_{(N-1)1}$ &  $ \cdots$ & $w_{(N-1)n}$ & $ \cdots$ & $w_{(N-1)(N-1)}$ \\
	};    

\end{tikzpicture}
\end{figure}

\noindent
Therefore \eqref{eq:algebraic_dft} can 
be seen as the complex inner product:
\begin{equation}\label{eq:compact_dft}
\mathsf{f}_h(x) :=  \sum_{n=0}^{N-1} x_n \ w_{hn}= \left<  w_h, x \right> = \overline{w}_h x^T
\end{equation}
where, $\overline{w}_h \in \mathbb{C}^N$, is the complex conjugate vector of Fourier coefficients at harmonic $h$, or  $(h+1)$-th row of $W$. 
Since the matrix of Fourier coefficients is symmetric and orthogonal, the DFT need not be evaluated for all $h=0,...,N-1$. In particular, when $N$ is a power of two, it will be sufficient to compute $\mathsf{F}_h$ for all $0 \leq h \leq N/2$. Also, observe that expressions \eqref{eq:algebraic_dft} and \eqref{eq:compact_dft} are equivalent from an interval analysis perspective because of repetitions invariance.\\

%inner product of \eqref{eq:compact_dft}

\begin{remark}
In what follows, the Fourier coefficients $w_{hn}$ will be considered as non-interval floating-point numbers. While this leads to results that are not verified, this assumption need not affect the generality of the presented method, which is primarily developed to deal with input uncertainties that are typically orders of magnitude larger in width than round-off errors. Under this working assumption, the presented interval algorithm can be considered rigorous but not verified.
%A verified version of the interval discrete Fourier transform is currently under development.
\end{remark}

\clearpage

\section{Theory of interval computations}\label{sec:theory}

In this section we will provide an overview of the fundamental definitions and theorems of interval computations that will be utilised to obtain the main result.\\

The theory of interval computations is based on two fundamental pillars: (i) the notion of an \emph{interval-valued function}, (ii) and the property of \emph{inclusion monotonicity}. An \emph{interval-valued function} is a function (or computer program), whose inputs are intervals. An interval-valued function is \emph{inclusion monotonic} if, when evaluated on two nested intervals, its resulting ranges are also nested with preserving order. Notion (i) implies that intervals represent a particular instance of a number; while property (ii) ensures that interval computations are inclusive thus rigorous. An interval-valued function that satisfies the property of inclusion monotonicity is called \emph{inclusive}. There are examples in the literature of interval-valued function that are not inclusion monotonic. Calculations that are not inclusion monotonic cannot be regarded as rigorous.
%an \emph{interval extension}

\subsection{Intervals and united extension}

\begin{definition}(interval)\label{def:interval}\\
Let $a, b \in \mathbb{R}$. An interval, denoted by $\inte{x}$, is the compact set: 
$$\inte{x}:=\{x : a \leq x \leq b \}.$$
An interval is said to be an element of an interval vector space $\mathbb{IR}$, such that $\inte{x} \in \mathbb{IR}$.
\end{definition}

\begin{definition}(n-interval or n-box)\label{def:N-interval}\\
Let $a, b \in \mathbb{R}^n$. An n-interval or interval vector, denoted by $\inte{x}$, is the n-box: 
$$\inte{x}:=\{x : a_i \leq x_i \leq b_i, \ \text{for} \ i=1,...,n \}.$$
An n-interval is said to be an element of an interval vector space $\mathbb{IR}^n$, such that $\inte{x} \in \mathbb{IR}^n$. 
\end{definition}

Note that our notation intentionally does not distinguish an n-interval from an interval.

Note that the definition of an n-interval implies non-interactivity, see Definition \ref{def:noninteractive}.

\begin{definition}(Interval-valued function)\label{def:interval-valued}\\
An \emph{interval-valued function} $F: \mathbb{IR}^n \rightarrow \mathbb{IR}^m$ is a function that maps an n-interval to an m-interval.

\end{definition}

%Note that an interval-valued function need not output an interval. 
Also note that $F$ is not inclusion monotonic in general.\\ %If, in what follows, a distinction is needed between the two 

%Example of an $F(\inte{x})$ that is not inclusion monotonic...

\begin{definition}(Inclusion monotonicity)\label{def:inclusion}\\
Let $\inte{x}, \inte{x}' \in \mathbb{IR}^n$, let $F:  \mathbb{IR}^n \rightarrow  \mathbb{IR}^m$ be an interval-valued function that outputs the two m-intervals: $\inte{y}=F(\inte{x})$, and $\inte{y}'=F(\inte{x}')$. Then $F$ is said to be \emph{inclusion monotonic} for $ \inte{x}' \subset \inte{x}$, if and only if: 
$$F(\inte{x}') \subset F(\inte{x}).$$
%holds true for all $\inte{x}, \inte{x}' \in \mathbb{IR}^n$, such that $\inte{x}' \subset \inte{x}$.
\end{definition}

\begin{definition}(Rigorous computations)\label{def:rigorous}\\
Interval computations are said to be \emph{rigorous} if they satisfy the property (ii) of inclusion monotonicity. 
\end{definition}

Note that Definition \ref{def:rigorous} is a weak definition of rigorous computing as it suitable only for (engineering) problems wherein input uncertainty is predominant. Stronger definitions of rigorous computing may include either/both round-off errors or/and truncation errors. 

\begin{definition}(Verified computations)\label{def:verified}\\
Interval computations are said to be \emph{verified} if they are rigorous and implemented using outward directed rounding. 
\end{definition}

\begin{definition}(United extension)\label{def:united_extension}\\
Let $\inte{x} \in \mathbb{IR}^n$ be an n-interval,  $A \subset \mathbb{R}^n$ an arbitrary set, $f: A \rightarrow B$  an arbitrary function that maps $A$ to $B \subset \mathbb{R}^m$, and let $S(A)$ be the collection of all n-intervals that are subsets of $A$. Then, the \emph{united extension} of $f$ is:
%Denote by $S(A)$ and $S(B)$ the family of subsets of $A$ and $B$, respectively. The set-valued mapping $\hat{f} : S(A) \rightarrow S(B)$
\begin{equation}\label{eq:united_extension}
\mathcal{F} (A) = \bigcup_{\inte{x} \in  S(A)} \left\{ f(x) :  x \in \inte{x} \right\}.
\end{equation}
%is the \emph{united extension} of $f$. Thus, $\hat{f}(X)$ is the union in $A$ of all the sets containing a single element $f(x)$ for some $x$ in $X$. 
\end{definition} 

\begin{definition}(United set)\label{def:united_set}\\
The image of the united extension of Definition \ref{def:united_extension} is also called a \emph{united set}, and is denoted by $B := \mathcal{F} (A)$.
\end{definition}

\begin{definition}
When $A=\inte{x}$ is an n-box rather than an arbitrary set the Definition \ref{def:united_extension} specialises to:
$$\mathcal{F} (\inte{x}) = \bigcup_{\inte{x}' \in  S(\inte{x})} \left\{ f(x) :  x \in \inte{x}' \right\}.$$
\end{definition}

Note that the united set of $\mathcal{F}$ denoted by $Y:=\mathcal{F} (\inte{x})$ is not an m-box. The range of $Y$ is often called a \emph{united box} and is denoted by:
$$\hat{\inte{y}} := \text{range}(Y).$$

\begin{observation}\label{prop:united_inclusion}
A united extension is inclusion monotonic.
%Let $\inte{x}, \inte{x}' \in \mathbb{IR}^n$, with $\inte{x}' \subset \inte{x}$, let $\mathcal{F}$ be a united extension. 
\end{observation}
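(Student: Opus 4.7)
The plan is to reduce the statement to a monotonicity property of the indexing family $S(\cdot)$ that defines the union in Definition \ref{def:united_extension}, and then conclude by the elementary fact that a union indexed over a larger set contains a union indexed over a smaller set. So I would pick $\inte{x}, \inte{x}' \in \mathbb{IR}^n$ with $\inte{x}' \subset \inte{x}$ and show directly that $\mathcal{F}(\inte{x}') \subset \mathcal{F}(\inte{x})$.

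First, I would establish the auxiliary inclusion $S(\inte{x}') \subset S(\inte{x})$. By Definition \ref{def:united_extension}, $S(\inte{x}')$ is the collection of all n-intervals contained in $\inte{x}'$. If $\inte{z} \in S(\inte{x}')$, then $\inte{z} \subset \inte{x}'$, and composing with $\inte{x}' \subset \inte{x}$ gives $\inte{z} \subset \inte{x}$, hence $\inte{z} \in S(\inte{x})$. So every index occurring in the union defining $\mathcal{F}(\inte{x}')$ also occurs in the union defining $\mathcal{F}(\inte{x})$.

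Next, I would unwind the definition of the united extension on each side and use the basic set-theoretic fact that if $I' \subset I$ and $\{E_\alpha\}_{\alpha \in I}$ is any family of sets, then $\bigcup_{\alpha \in I'} E_\alpha \subset \bigcup_{\alpha \in I} E_\alpha$. Applying this with $I' = S(\inte{x}')$, $I = S(\inte{x})$, and $E_{\inte{z}} = \{f(x) : x \in \inte{z}\}$ yields
\[
\mathcal{F}(\inte{x}') \;=\; \bigcup_{\inte{z} \in S(\inte{x}')} \{f(x) : x \in \inte{z}\} \;\subset\; \bigcup_{\inte{z} \in S(\inte{x})} \{f(x) : x \in \inte{z}\} \;=\; \mathcal{F}(\inte{x}),
\]
which is exactly the condition of Definition \ref{def:inclusion}.

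There is essentially no technical obstacle: the statement is a formal consequence of the way the united extension is built as an indexed union, and the only nontrivial-looking step, the monotonicity $S(\inte{x}') \subset S(\inte{x})$, follows from transitivity of set inclusion. The one point worth stating carefully is that Definition \ref{def:inclusion} and Definition \ref{def:united_extension} are phrased for different ambient objects (an interval-valued function $F : \mathbb{IR}^n \to \mathbb{IR}^m$ versus the united extension $\mathcal{F}$ producing a united set rather than a box), so I would include a one-line remark clarifying that inclusion monotonicity is being interpreted in its natural set-theoretic sense on the united sets $\mathcal{F}(\inte{x}')$ and $\mathcal{F}(\inte{x})$, and that taking ranges on both sides then gives the same monotonicity statement for the associated united boxes $\hat{\inte{y}}$.
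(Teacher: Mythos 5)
Your proposal is correct and follows essentially the same route as the paper's own proof, which simply asserts that the inclusion $\mathcal{F}(A') \subset \mathcal{F}(A)$ ``clearly follows'' from Definition \ref{def:united_extension}; you have merely made explicit the two steps hiding behind that phrase, namely $S(\inte{x}') \subset S(\inte{x})$ by transitivity of inclusion and the monotonicity of unions in their index set. Your closing remark about the mismatch between Definition \ref{def:inclusion} (stated for box-valued $F$) and the set-valued $\mathcal{F}$ is a fair point of care that the paper glosses over, but it does not change the substance of the argument.
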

\begin{proof}
Let $\mathcal{F}$ be a united extension of the function $f$, and let $A, A' \subset \mathbb{R}^n$ be two arbitrary sets such that $A' \subset A$. Then, from Definition \ref{def:united_extension} it clearly follows that: 
$$\mathcal{F}(A') \subset \mathcal{F}(A).$$
Hence $\mathcal{F}$ is inclusion monotonic.
%The proof follows from the definition of a function $f$, which can be found in any mathematics textbooks, e.g. \cite{function}.
\end{proof}

A united extension is abstracted of the notion of interval and interval computations, as it belongs to the more general class of set-valued function. %On the other hand, in order for an interval extension to be defined, the notions of interval arithmetic and interval computations are needed. \\
Moreover, a united extension and its united set are unique, thus there is only one single united extension of a function $f$, and one single united set of $f$ given 
 an input $A$ or $\inte{x}$.

Note that there is no polynomial-time algorithm that can compute a united extension in the general case.\\

The following result provides the fundamental condition for an interval-valued function to be considered inclusion monotonic. A proof can be found in \cite{moore1966interval}.
\begin{proposition}
Let $F: \mathbb{IR}^n \rightarrow \mathbb{IR}^m$ be interval-valued function, $\inte{x} \in \mathbb{IR}^n$ be a n-interval. Then, $F$ is said to be inclusion monotonic if:
$$\lim_{\inte{x} \rightarrow \hat{x}} F(\inte{x}) = F(\hat{x}) \ \ \text{for\ all}  \ \ \hat{x} \in \inte{x}.$$
%$$F(\inte{x}) = F(\hat{x}) \ \text{when} \ \inte{x} \rightarrow \hat{x}.$$
%\text{if} \ \text{and} \ \text{only} \ \text{if}
% $$F(\inte{x}) = F(\hat{x}) \ \text{when} \ \inte{x} \rightarrow \hat{x}.$$
\end{proposition}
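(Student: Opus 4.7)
The plan is to take arbitrary $\inte{x}' \subset \inte{x}$ and show $F(\inte{x}') \subset F(\inte{x})$, by propagating the inclusion from degenerate sub-intervals (where the limit hypothesis pins $F$ down exactly) to non-degenerate ones through Hausdorff approximation.

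First, I would reinterpret the hypothesis $\lim_{\inte{x} \to \hat{x}} F(\inte{x}) = F(\hat{x})$ as continuity of $F$ in the Hausdorff metric at every degenerate input: whenever a nested sequence $\inte{x}_k$ contracts onto a singleton $\{\hat{x}\}$, the outputs $F(\inte{x}_k)$ contract onto the degenerate $m$-interval $F(\hat{x})$. In particular, $F$ restricts on singletons to an ordinary continuous point function $f:\inte{x} \to \mathbb{R}^m$ given by $f(\hat{x}) := F(\hat{x})$, and each such $F(\hat{x})$ must itself be a degenerate $m$-box.

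Second, I would fix $\hat{x} \in \inte{x}'$ and construct a contracting sequence $\inte{x}_k \downarrow \{\hat{x}\}$ with $\inte{x}_k \subseteq \inte{x}$ for all $k$. Applying the hypothesis inside $\inte{x}$ places $f(\hat{x}) = F(\hat{x})$ as the Hausdorff limit of $F(\inte{x}_k)$; since $F(\inte{x})$ is a closed $m$-box that (by the tacit consistency of $F$ on nested inputs) contains each $F(\inte{x}_k)$ in the tail of the sequence, we obtain $f(\hat{x}) \in F(\inte{x})$. Sweeping $\hat{x}$ over $\inte{x}'$ yields the pointwise inclusion $f(\inte{x}') \subseteq F(\inte{x})$. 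A second application of the hypothesis, now with $\inte{x}'$ as the ambient interval, identifies the endpoints of $F(\inte{x}')$ as Hausdorff limits of $f$-values on degenerate sub-intervals of $\inte{x}'$, all of which then lie in $F(\inte{x})$; closedness of $F(\inte{x})$ closes the inclusion.

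The main obstacle is the monotonicity step hidden inside the second paragraph, namely that the images $F(\inte{x}_k)$ of a nested contracting sequence must sit inside $F(\inte{x})$ along the entire approach to the limit rather than merely at the endpoint. The limit hypothesis by itself only pins down behaviour at degenerate inputs, so I would handle this by contradiction: assume for some $k$ that $F(\inte{x}_k) \not\subseteq F(\inte{x})$, pick a point in $F(\inte{x}_k) \setminus F(\inte{x})$, further contract along a sub-sequence, and use continuity of $f$ together with closedness of $F(\inte{x})$ to force a boundary value of $F(\hat{x})$ outside $F(\inte{x})$, contradicting the hypothesis. This step is where the topological regularity encoded in the limit condition is genuinely doing work, and explains why inclusion monotonicity does not follow purely from the algebraic definitions but requires the continuity-type assumption imposed here.
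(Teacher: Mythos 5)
You have correctly located the weak point of your own argument, but the patch you propose does not work, and in fact cannot work from the stated hypothesis alone. The step you defer to the final paragraph --- that $F(\inte{x}_k)\subseteq F(\inte{x})$ for a nested contracting sequence $\inte{x}_k\subseteq\inte{x}$ --- is not a technical lemma on the way to inclusion monotonicity; it \emph{is} inclusion monotonicity, instantiated on the particular nested pairs you need. Your proposed contradiction argument picks a point of $F(\inte{x}_k)\setminus F(\inte{x})$ and tries to push it, by contracting further, to a violation ``at a degenerate input.'' But the hypothesis $\lim_{\inte{x}\to\hat{x}}F(\inte{x})=F(\hat{x})$ constrains $F$ only in the limit at singletons; it says nothing about how the values of $F$ at two different non-degenerate nested inputs relate to one another, so no contradiction is reachable. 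A concrete witness: define $F:\mathbb{IR}\to\mathbb{IR}$ by $F([a,b]):=\left[\tfrac{a+b}{2},\tfrac{a+b}{2}\right]$. Then $F(\hat{x})=[\hat{x},\hat{x}]$ and $\lim_{\inte{x}\to\hat{x}}F(\inte{x})=F(\hat{x})$ for every $\hat{x}$, so the hypothesis of the proposition holds; yet $F([0,1])=[\tfrac12,\tfrac12]\not\subseteq[1,1]=F([0,2])$. So the limit condition by itself does not imply inclusion monotonicity, and any blind derivation must stall exactly where yours does.

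For comparison: the paper offers no proof of this proposition at all --- it defers to Moore (1966) --- so there is nothing internal to match your argument against. In Moore's treatment the conclusion is obtained for functions with additional structure (interval-valued functions assembled from the elementary interval-arithmetic operations, each of which is separately inclusion monotonic, so that monotonicity propagates through the composition); the continuity-at-singletons condition serves there to identify $F$ as an extension of the underlying point function $f$, not to generate monotonicity on its own. If you want a correct proof, you must either import that compositional hypothesis explicitly or restate the proposition so that the limit condition is paired with an assumption that already carries the nested-image inclusion.
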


\subsection{Interval arithmetic}
With the rules of interval arithmetic we can practically turn an algebraic expression or computer program in to an interval-value function. 

\begin{definition}(United extension of a binary operation)\\
Let $\inte{x}, \inte{y} \in \mathbb{IR}$, $S(\inte{x}, \inte{y})$ the family of subset of the 2-box $(\inte{x}, \inte{y})$, and let $\diamond$ be one of the four arithmetic operators $+,-,*, \ \text{or} \  /$. Then, the united extension of $f(x,y)=x \diamond y$ is defined as: 
$$\mathcal{F}(\inte{x}, \inte{y}) = \bigcup_{(\inte{x}',  \inte{y}') \in  S(\inte{x}, \inte{y})} \{ x \diamond y : x \in  \inte{x}', \ y \in \inte{y}'\},$$
provided that $0 \notin \inte{y}$ for $\diamond = /$.

\end{definition}

\begin{observation}(United set of a binary operation)\\
Let $Z = \mathcal{F}(\inte{x}, \inte{y})$ be the united set for $f(x,y)=x \diamond y$, and let $\inte{xy} = (\inte{x}, \inte{y})$ be a 2-box. Then, it holds that:
\begin{equation}\label{eq:operational_united_set}
\text{range}(Z) = \{z :  \inf_{x,y \in \inte{xy}} x \diamond y \leq z \leq \sup_{x,y \in \inte{xy}} x \diamond y \} = [ \inf_{x,y \in \inte{xy}} x \diamond y, \ \sup_{x,y \in \inte{xy}} x \diamond y  ].
\end{equation}
\end{observation}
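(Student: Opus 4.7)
The plan is to establish the claimed identity in two steps: (a) show that the united set $Z$ coincides with the direct image $\{x \diamond y : x \in \inte{x},\, y \in \inte{y}\}$ of the full 2-box, and (b) show that this image is itself a closed interval, namely the one whose endpoints are the infimum and supremum of $x \diamond y$ on $\inte{xy}$. Since $\mathrm{range}(Z)$ is by definition the smallest interval containing $Z$, combining (a) and (b) yields the statement.

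For (a) I would argue by double inclusion. The inclusion $\supset$ is immediate because $(\inte{x}, \inte{y})$ belongs to $S(\inte{x}, \inte{y})$ (a 2-box is a sub-box of itself), so the corresponding term $\{x \diamond y : x \in \inte{x},\, y \in \inte{y}\}$ appears explicitly in the union defining $\mathcal{F}(\inte{x}, \inte{y})$. For $\subset$, fix $z \in Z$; by definition there exist a sub-2-box $(\inte{x}', \inte{y}') \subset (\inte{x}, \inte{y})$ and a point $(x,y) \in (\inte{x}', \inte{y}')$ with $z = x \diamond y$. But then $x \in \inte{x}$ and $y \in \inte{y}$, so $z$ lies in the direct image, as required.

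For (b) I would invoke continuity and topology. Each of the four operations $+, -, *, /$ is continuous as a map $\mathbb{R}^2 \to \mathbb{R}$ on its natural domain, and the safeguarding hypothesis $0 \notin \inte{y}$ in the division case guarantees that $\diamond$ is continuous on the whole 2-box $\inte{xy}$. The box $\inte{xy}$ is compact and connected, hence its image under the continuous map $(x,y) \mapsto x \diamond y$ is a compact connected subset of $\mathbb{R}$, i.e., a closed bounded interval. The extreme-value theorem identifies the endpoints of this interval as the attained infimum and supremum of $x \diamond y$ over $\inte{xy}$, which is exactly the right-hand side of \eqref{eq:operational_united_set}.

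The main obstacle, if any, is the need to justify step (b) rigorously rather than treat it as self-evident: one must appeal to the continuity of $\diamond$ (excluding division at zero, which the hypothesis on $\inte{y}$ precisely rules out) together with the connectedness/compactness of the 2-box to conclude that the image has no gaps and contains its endpoints. Step (a), although it looks like the substantive assertion, turns out to be essentially tautological once one notices that the union over sub-boxes of $(\inte{x}, \inte{y})$ is swallowed by the single term corresponding to the full box.
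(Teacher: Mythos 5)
Your proposal is correct, but it does considerably more work than the paper, which disposes of the observation in one line: ``the proof follows immediately noticing that $Z \subseteq \mathbb{R}$.'' The paper's implicit argument is that $\text{range}(\cdot)$ is by definition the interval hull, so for any bounded $Z \subseteq \mathbb{R}$ one has $\text{range}(Z) = [\inf Z, \sup Z]$, and the infimum and supremum of $Z$ over the union of sub-boxes coincide with the infimum and supremum of $x \diamond y$ over the full 2-box. Your step (a) --- that the union over sub-boxes collapses to the single term for the full box --- is the tacit observation making that identification legitimate, and you are right that it is essentially tautological. Your step (b) goes further than the paper needs: by invoking continuity of $\diamond$ (with $0 \notin \inte{y}$ guarding the division case), compactness, and connectedness of the 2-box, you show that $Z$ \emph{itself} is already a closed interval and that the extrema are attained, rather than merely that its interval hull is one. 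This buys you a strictly stronger conclusion ($Z = \text{range}(Z)$, with attained endpoints, so the bracket notation on the right-hand side of \eqref{eq:operational_united_set} is honest), at the cost of topological machinery the paper chose to leave implicit. Both routes are sound; yours is the more self-contained and would be the one to prefer if $\text{range}$ had not been defined as the interval hull.
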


\begin{proof}
The proof follows immediately noticing that $Z \subseteq \mathbb{R}$.
\end{proof}

The range of the united set for all four binary arithmetic operations can be obtained in closed form by means of \eqref{eq:operational_united_set}. Let $\inte{x}=\{x: \lo{x} \leq x \leq \hi{x} \}=[\lo{x}, \hi{x}]$, $\inte{y}=\{y: \lo{y} \leq x \leq \hi{y} \}=[\lo{y}, \hi{y}]$, then equation \eqref{eq:operational_united_set} specialises to:

\begin{align*}
\inte{x} + \inte{y} =\ & [\lo{x} + \lo{y},\ \hi{x} + \hi{y}] \\
\inte{x} - \inte{y} =\ & [\lo{x} - \hi{y},\ \hi{x} - \lo{y}] \\
\inte{x} * \inte{y} =\ & [\min \{ \lo{x}\lo{y},\hi{x}\lo{y},\lo{x}\hi{y},\hi{x}\hi{y} \},\ \max \{ \lo{x}\lo{y},\hi{x}\lo{y},\lo{x}\hi{y},\hi{x}\hi{y} \}] \\
\inte{x} \ / \ \inte{y} =\ & \inte{x} * [1/\hi{y},\  1/\lo{y}], \  \text{for} \  0 \notin \inte{y}\\
\end{align*}

A more efficient form to evaluate the multiplication rule can be found in \cite{neumaier1990interval}.

\subsection{Interval extensions}

With the notion of interval-valued function, and the rules of interval arithmetic we can give a more precise definition of interval computations via the definition of an interval extension.
%a function $f$ can now map an n-interval using the rules of interval arithmetic. 

\begin{definition}(Interval extension)\\
Let $f: \mathbb{R}^n \rightarrow \mathbb{R}^m$, $\inte{x} \in \mathbb{IR}^n$, $x \in \inte{x}$,  and $\mathsf{f}$ be an algebraic expression of $f$. Then, an \emph{interval extension} $\mathsf{F}$ of $f$ via the expression $\mathsf{f}$, is an interval-valued function obtained evaluating the expression $\mathsf{f}$ on $ \inte{x}$ using the rules of interval arithmetic.
\end{definition}

Recall that the algebra of intervals is not distributive, rather it is \emph{sub-distributive}, thus two equivalent algebraic expressions of $f$ may produce different results when evaluated with interval arithmetic rules \cite{neumaier1990interval}. 

\begin{observation}
An interval extension is inclusion monotonic.
\end{observation}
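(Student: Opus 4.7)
The plan is to prove inclusion monotonicity of an interval extension by structural induction on the algebraic expression $\mathsf{f}$, leveraging the fact that each primitive interval operation has already been shown (via the preceding observation) to be inclusion monotonic as a united extension.

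First, I would fix $\inte{x}, \inte{x}' \in \mathbb{IR}^n$ with $\inte{x}' \subseteq \inte{x}$ and set up the induction. The base cases of the algebraic expression $\mathsf{f}$ are the constants (which produce a singleton interval regardless of input, hence trivially satisfy $\mathsf{F}(\inte{x}') \subseteq \mathsf{F}(\inte{x})$) and the coordinate projections $\pi_i(x) = x_i$ (for which $\mathsf{\Pi}_i(\inte{x}') = \inte{x}'_i \subseteq \inte{x}_i = \mathsf{\Pi}_i(\inte{x})$ directly from the definition of an $n$-box).

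Next, for the inductive step, I would write any compound subexpression in the form $\mathsf{g} \diamond \mathsf{h}$ with $\diamond \in \{+,-,*,/\}$ (or unary negation, etc.), and assume as induction hypothesis that the interval extensions $\mathsf{G}, \mathsf{H}$ of $\mathsf{g}, \mathsf{h}$ are already inclusion monotonic. By the previously established Observation, the binary operation $\diamond$ extended to intervals coincides with the united extension of the real binary operation, and a united extension is inclusion monotonic. Chaining these two facts gives
$$\mathsf{F}(\inte{x}') = \mathsf{G}(\inte{x}') \diamond \mathsf{H}(\inte{x}') \subseteq \mathsf{G}(\inte{x}) \diamond \mathsf{H}(\inte{x}) = \mathsf{F}(\inte{x}),$$
since the left containment holds componentwise on the two operands and the $\diamond$ on intervals preserves containment of its arguments. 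A brief explicit justification of this last step would use \eqref{eq:operational_united_set}: shrinking either factor shrinks the set of all $x \diamond y$ being united, which shrinks the range.

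The main obstacle, and the only subtlety I foresee, is the division case, where one needs the side condition $0 \notin \inte{h}(\inte{x})$ to be preserved under shrinkage of $\inte{x}$; this is automatic because $\mathsf{H}(\inte{x}') \subseteq \mathsf{H}(\inte{x})$ by the induction hypothesis, so if $0 \notin \mathsf{H}(\inte{x})$ then $0 \notin \mathsf{H}(\inte{x}')$ as well, and the division rule applies consistently on both sides. With this remark the induction closes, and inclusion monotonicity of the interval extension $\mathsf{F}$ follows for every expression $\mathsf{f}$ buildable from the four interval arithmetic operations.
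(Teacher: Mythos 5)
Your proof is correct, but it takes a genuinely different route from the one in the paper. The paper's proof appeals to the earlier Proposition (cited from Moore) that an interval-valued function is inclusion monotonic provided $\lim_{\inte{x} \rightarrow \hat{x}} \mathsf{F}(\inte{x}) = \mathsf{F}(\hat{x})$ for all $\hat{x} \in \inte{x}$, and then simply notes that since $\mathsf{F}$ and $\mathsf{f}$ are the same algebraic expression, $\mathsf{F}$ collapses to $\mathsf{f}$ on singletons, so the limit condition is satisfied. You instead give the classical structural-induction argument: constants and coordinate projections as base cases, and an inductive step in which each interval arithmetic operation, being by \eqref{eq:operational_united_set} the exact range of the corresponding real operation over the 2-box of its operands, preserves containment when either operand shrinks; your observation that the side condition $0 \notin \mathsf{H}(\inte{x}')$ for division is inherited automatically from the induction hypothesis is a point the paper does not address at all. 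What your approach buys is self-containedness and rigour at the level of the arithmetic rules themselves: it does not lean on the limit criterion, whose sufficiency for inclusion monotonicity is nontrivial and is only cited, not proved, in the paper. What the paper's approach buys is brevity and the fact that it applies uniformly to any expression evaluated by interval arithmetic without case analysis on its syntactic structure. Both arguments establish the same conclusion.
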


\begin{proof}
Let $\mathsf{F}$ be the interval extension of $f$ via the expression $\mathsf{f}$, whose evaluation consists in applying the rules of interval arithmetic. Note that because $\mathsf{F}$ is a particular instance of an interval-valued function, then to prove that $\mathsf{F}$ is inclusion monotonic, it will be sufficient to show that:
$$\lim_{\inte{x} \rightarrow \hat{x}} \mathsf{F}(\inte{x}) = \mathsf{F}(\hat{x}) \ \ \text{for\ all}  \ \ \hat{x} \in \inte{x}.$$
Since $\mathsf{F}$ and $\mathsf{f}$ are identical algebraic expressions, when evaluated on singletons it holds that $\mathsf{F}(\hat{x}) \equiv \mathsf{f}(\hat{x})$, thus the above limit holds for all $ \hat{x} \in \inte{x}$.
\end{proof}

Note that there are as many interval extensions as there are equivalent algebraic expressions of the function. \\

%Theorem: an interval extension is always inclusion monotonic\\

The following result can be deemed the fundamental theorem of interval computations. The theorem states the renowned conservatism of interval computations. A complete proof of this theorem can be found in the seminal work of Moore \cite{moore1966interval}.

\begin{theorem}(Conservatism of interval computations)\label{th:fundamental}\\
Let $f:\mathbb{R}^n \rightarrow \mathbb{R}^m$ be a function, $\mathcal{F}$ be its united extension, $\mathsf{F}$ be any of its interval extensions, and $\inte{x} \in \mathbb{IR}^n$.  Then, it holds that:
$$\mathcal{F}(\inte{x}) \subset \mathsf{F}(\inte{x}).$$
\end{theorem}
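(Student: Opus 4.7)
The plan is to prove the inclusion by a pointwise argument that combines two facts already established in the excerpt: (a) $\mathsf{F}$ is inclusion monotonic, and (b) $\mathsf{F}$ coincides with the ordinary algebraic expression $\mathsf{f}$ on degenerate intervals.  First I would unpack the left-hand side.  Since every singleton $\{x\}$ with $x \in \inte{x}$ is itself a (degenerate) n-interval contained in $\inte{x}$, it belongs to $S(\inte{x})$, so Definition \ref{def:united_extension} reduces to
\[
\mathcal{F}(\inte{x}) \;=\; \{\, f(x) : x \in \inte{x}\,\}.
\]
Hence it suffices to prove $f(\hat{x}) \in \mathsf{F}(\inte{x})$ for each fixed $\hat{x} \in \inte{x}$.

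Next I would use inclusion monotonicity.  For any $\hat{x} \in \inte{x}$, the degenerate n-interval $[\hat{x},\hat{x}]$ is contained in $\inte{x}$, and by the preceding observation the interval extension $\mathsf{F}$ is inclusion monotonic, so
\[
\mathsf{F}(\hat{x}) \;\subset\; \mathsf{F}(\inte{x}).
\]
The remaining step is to identify $\mathsf{F}(\hat{x})$ with $f(\hat{x})$.  This follows because $\mathsf{F}$ and $\mathsf{f}$ are the \emph{same} algebraic expression; on singleton inputs each interval arithmetic operator $+,-,*,/$ reduces to its real counterpart (the closed-form rules displayed after \eqref{eq:operational_united_set} all collapse to the ordinary operation when the endpoints coincide).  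A short induction on the parse tree of $\mathsf{f}$ then yields $\mathsf{F}(\hat{x}) = \mathsf{f}(\hat{x}) = f(\hat{x})$, so $f(\hat{x}) \in \mathsf{F}(\inte{x})$.  Ranging over all $\hat{x} \in \inte{x}$ gives $\mathcal{F}(\inte{x}) \subset \mathsf{F}(\inte{x})$.

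The main obstacle is the structural induction in the last step: one must be careful that the interval arithmetic evaluation of every subexpression of $\mathsf{f}$ on a point input is well-defined (in particular, that no intermediate division evaluates an interval containing $0$ for a valid point evaluation) and that the closure rules for $+,-,*,/$ genuinely give $[a,a] \diamond [b,b] = [a \diamond b,\, a \diamond b]$.  Both are routine but must be stated, because without them the identification $\mathsf{F}(\hat{x}) = f(\hat{x})$ fails.  Everything else in the argument is immediate from the observations and definitions already available.
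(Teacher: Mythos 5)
Your proof is correct, and it is in fact the classical Moore argument for this theorem; but it takes a genuinely different (and more complete) route than the paper's. The paper argues by contradiction at the level of components: it asserts that if $\mathrm{range}(Y)\not\subset\inte{y}$ there would be images of points of $\inte{x}$ escaping $\inte{y}$, "violating inclusion monotonicity" --- an argument that never makes explicit \emph{why} such an escape contradicts inclusion monotonicity, namely that $\mathsf{F}$ evaluated on the degenerate sub-interval $[\hat{x},\hat{x}]$ returns exactly $f(\hat{x})$. You supply precisely that missing link, and you run the argument directly rather than by contradiction: (i) collapse the united extension to the image set $\{f(x):x\in\inte{x}\}$ (legitimate, since $\inte{x}\in S(\inte{x})$ and every other term of the union in Definition \ref{def:united_extension} is contained in that one); (ii) apply inclusion monotonicity to $[\hat{x},\hat{x}]\subset\inte{x}$; (iii) identify $\mathsf{F}(\hat{x})=\mathsf{f}(\hat{x})=f(\hat{x})$ by structural induction on the expression, checking that each interval operation degenerates correctly on point inputs. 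What your version buys is rigour: the pointwise argument isolates exactly which properties of interval arithmetic are used (closure of the four operations on degenerate intervals, well-definedness of intermediate divisions), whereas the paper's contradiction is essentially a restatement of the conclusion. What the paper's version buys is brevity, at the cost of leaving the key singleton-evaluation step implicit (it appears only in the proof of the preceding observation). Your caveats about division by intervals containing zero and about the induction being "routine but necessary" are well placed; no gap remains.
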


\begin{proof}
Let $Y := \mathcal{F}(\inte{x})$ be the united set of $\mathcal{F}$, and $\inte{y} :=\mathsf{F}(\inte{x})$ be the m-interval of $\mathsf{F}$. Since $\mathsf{F}$ is inclusion monotonic the following must hold:
$$\text{range}(Y) \subset \inte{y}.$$
If this were not true, there would be at least a component of $\inte{y}$ that does not contain the united set, which would imply that there are images in $Y$ that, while complying with the interval constraints defined by $\inte{x}$, are not contained in $\inte{y}$, which in turns would violate the inclusion property for $\mathsf{F}$.
%which proves the original statement.
\end{proof}

Theorem \ref{th:fundamental} in practice states that an interval extension is a so-called outer approximation of its united extension.

\begin{definition}(Optimal extension)\label{def:optimal}\\
Let $f:\mathbb{R}^n \rightarrow \mathbb{R}^m$ be a function, $\mathcal{F}$ be its united extension, $\inte{x} \in \mathbb{IR}^n$, and $Y := \mathcal{F}(\inte{x})$ be the united set of $\mathcal{F}$ applied to $\inte{x}$. Then, a united extension $\mathsf{F}$ is said to be an \emph{optimal extension} if and only if:
$$\text{range}(Y) = \mathsf{F}(\inte{x}).$$
\end{definition}

Note that an optimal extension yields the exact bounds on its corresponding function. 

\begin{observation}(Dependency problem)\label{obs:dependency_problem}\\
Let $\inte{x}, \inte{y} \in \mathbb{IR}$, $\inte{z} = \inte{x} \circ \inte{y}$  and $\inte{w} = \inte{z} \diamond \inte{x}$ be two intervals obtained with the rules of interval arithmetic, and  `$\circ$'  and `$\diamond$' be any two of the four arithmetic operations. Now, let $f(x,y) = \mathsf{f}(x,y)= (x \circ y) \diamond x$, $\inte{w} = \mathsf{F}(\inte{x},\inte{y})$, and $W=\mathcal{F}(x,y)$. Then, it holds that:
$$\text{range}(W) \subseteq \inte{w}.$$
\end{observation}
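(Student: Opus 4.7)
The plan is to derive this statement as a direct corollary of the conservatism theorem (Theorem~\ref{th:fundamental}), so there is no heavy lifting required: the work has already been done in the general setting and it simply needs to be specialised to the two-stage expression $\mathsf{f}(x,y)=(x\circ y)\diamond x$. The observation is interesting not because the inequality is hard to prove but because it is typically \emph{strict}, which is exactly the dependency phenomenon that motivates the rest of the paper.

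First, I would cast the construction as an interval extension. The composite function $f:\mathbb{R}^2\to\mathbb{R}$ is given by the explicit algebraic expression $\mathsf{f}(x,y)=(x\circ y)\diamond x$. Evaluating this expression on the 2-box $(\inte{x},\inte{y})$ with the arithmetic rules stated in the preceding subsection consists precisely of computing $\inte{z}=\inte{x}\circ\inte{y}$ and then $\inte{w}=\inte{z}\diamond\inte{x}$. Hence $\mathsf{F}(\inte{x},\inte{y})=\inte{w}$ is, by definition, an interval extension of $f$ via $\mathsf{f}$. (One must also note that when $\diamond=/$ the hypothesis $0\notin\inte{x}$ is needed, inherited from the division rule.)

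Next, I would invoke Theorem~\ref{th:fundamental} applied to this particular interval extension: for the united extension $\mathcal{F}$ of $f$ we have $\mathcal{F}(\inte{x},\inte{y})\subset\mathsf{F}(\inte{x},\inte{y})$. Since $W=\mathcal{F}(\inte{x},\inte{y})$ and $\mathsf{F}(\inte{x},\inte{y})=\inte{w}$ is already an interval (hence coincides with its own range), taking the range of both sides preserves the inclusion and yields $\text{range}(W)\subseteq\inte{w}$, which is the claim.

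The only subtle point, and what I would emphasise in a short remark at the end rather than as part of the proof itself, is why the inclusion is in general strict. This is the essence of the dependency problem: when $\mathsf{F}$ evaluates $\inte{z}\diamond\inte{x}$ it treats the second occurrence of $\inte{x}$ as if it were independent from the copy already consumed inside $\inte{z}=\inte{x}\circ\inte{y}$. Formally, the interval-arithmetic evaluation computes $\{(a\circ b)\diamond a' : a,a'\in\inte{x},\,b\in\inte{y}\}$, while $\text{range}(W)$ is $\{(a\circ b)\diamond a : a\in\inte{x},\,b\in\inte{y}\}$. The former set contains the latter but is typically larger, which is the mechanism behind the containment being proper. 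No real obstacle arises; the statement is a packaging of Theorem~\ref{th:fundamental} in the smallest non-trivial case exhibiting variable repetition.
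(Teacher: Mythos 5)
Your proposal is correct, but it reaches the conclusion by a different route than the paper. You treat $\inte{w}=\mathsf{F}(\inte{x},\inte{y})$ as an interval extension of $f$ via the expression $\mathsf{f}(x,y)=(x\circ y)\diamond x$ and then invoke Theorem~\ref{th:fundamental} as a black box, so the inclusion $\text{range}(W)\subseteq\inte{w}$ falls out as a one-line corollary of the already-established conservatism of interval computations. The paper instead argues directly from the dependency structure: it observes that the joint set $(\inte{z},\inte{x})$ is functionally constrained and hence only a subset of its 2-box, while the arithmetic rule for $\inte{z}\diamond\inte{x}$ implicitly evaluates over the full 2-box, so the second operation can only widen the result. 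That mechanistic argument is essentially what you relegate to your closing remark (the comparison of $\{(a\circ b)\diamond a' : a,a'\in\inte{x},\ b\in\inte{y}\}$ with $\{(a\circ b)\diamond a : a\in\inte{x},\ b\in\inte{y}\}$), which is in fact the sharpest and most self-contained version of either argument. Your derivation buys brevity and formal economy, and the ordering is legitimate since Theorem~\ref{th:fundamental} precedes the observation; the paper's version buys an explicit identification of \emph{where} the slack comes from, which is the pedagogical point of naming this the dependency problem. You also correctly add the caveat $0\notin\inte{x}$ when $\diamond=/$, which the paper's statement silently omits.
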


\begin{proof}
Since there is functional dependence between $\inte{z}$ and $\inte{x}$, the joint set $(\inte{z},\inte{x})$ is a subset of their 2-box. 
Because the rules of interval arithmetic encode no dependence assumption, the second expression $\inte{w}=\inte{z} \diamond \inte{x}$ is evaluated under the assumption that the joint set $(\inte{z},\inte{x})$ is a 2-box, whereas the joint set $(\inte{z},\inte{x})$ is clearly a subset of their 2-box.
%so they assume that the two operands' joint set is indeed a 2-box. 
Thus any further operation between $\inte{z}$ and $\inte{x}$ results in an interval that is wider than it ought to be.
\end{proof}

Because of the dependency problem (Observation \ref{obs:dependency_problem}) of interval extensions, some algebraic expressions evaluate in intervals that are too wide.  Observation   \ref{obs:dependency_problem} goes also by the name of \emph{wrapping effect}.

\begin{observation}(Repeated variables)\label{obs:repeated}\\
Let $f$ be a function, and $\mathsf{f}$ be its algebraic expression with no repeated variables $\inte{x}$. Then its interval extension $\mathsf{F}$ is an optimal extension. 
\end{observation}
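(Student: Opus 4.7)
The plan is to prove the non-trivial inclusion $\mathsf{F}(\inte{x}) \subseteq \mathrm{range}(\mathcal{F}(\inte{x}))$, since Theorem \ref{th:fundamental} already gives the reverse inclusion $\mathcal{F}(\inte{x}) \subseteq \mathsf{F}(\inte{x})$. Combining both yields the equality required by Definition \ref{def:optimal}. I will proceed by structural induction on the syntax tree of the algebraic expression $\mathsf{f}$, exploiting the ``no repeated variables'' hypothesis as the inductive invariant that keeps the sub-expressions on the two sides of every operator syntactically (and hence variable-wise) disjoint.

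For the base case, if $\mathsf{f}$ is a single variable $x_i$ (or a constant), then $\mathsf{F}(\inte{x}) = \inte{x}_i$ (or the constant), which is exactly the range of the identity restricted to $\inte{x}_i$, so the claim is immediate. For the inductive step, write $\mathsf{f} = \mathsf{g} \diamond \mathsf{h}$ where $\diamond \in \{+,-,*,/\}$. Because $\mathsf{f}$ has no repeated variable, the sets of variables appearing in $\mathsf{g}$ and $\mathsf{h}$ are disjoint; hence I can split $\inte{x}$ into two sub-boxes $\inte{x}_G$ and $\inte{x}_H$ with $\inte{x} = (\inte{x}_G, \inte{x}_H)$ and the two sub-expressions act on independent coordinates. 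By the induction hypothesis applied to $\mathsf{g}$ and $\mathsf{h}$, one has $\mathsf{G}(\inte{x}_G) = \mathrm{range}(\mathcal{G}(\inte{x}_G))$ and $\mathsf{H}(\inte{x}_H) = \mathrm{range}(\mathcal{H}(\inte{x}_H))$; in other words, every value in $\mathsf{G}(\inte{x}_G)$ is attained by some $x_G \in \inte{x}_G$, and similarly for $\mathsf{H}$.

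Now invoke the united set of a binary operation \eqref{eq:operational_united_set}: applied to the two intervals $\mathsf{G}(\inte{x}_G)$ and $\mathsf{H}(\inte{x}_H)$ (viewed as ranges of $g$ and $h$ over independent variables), interval arithmetic returns exactly $\{u \diamond v : u \in \mathsf{G}(\inte{x}_G),\ v \in \mathsf{H}(\inte{x}_H)\}$. By the inductive characterisation of $\mathsf{G}$ and $\mathsf{H}$, this equals $\{g(x_G) \diamond h(x_H) : x_G \in \inte{x}_G,\ x_H \in \inte{x}_H\}$, which is precisely $\mathrm{range}(\mathcal{F}(\inte{x}))$ because $(x_G, x_H)$ ranges independently over the full box $\inte{x}$. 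This proves $\mathsf{F}(\inte{x}) \subseteq \mathrm{range}(\mathcal{F}(\inte{x}))$, completing the induction.

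The main obstacle I expect is the careful bookkeeping at the inductive step: one must justify that using the full Cartesian product $\inte{x}_G \times \inte{x}_H$ is exactly what interval arithmetic does, and that this coincides with the united extension precisely because the no-repetition hypothesis guarantees no dependency between $\mathsf{g}$ and $\mathsf{h}$ (connecting to Observation \ref{obs:dependency_problem}). The unary case (e.g.\ if one were to extend the observation to elementary functions such as $\sin$, $\exp$) would follow by an analogous one-step argument and is implicitly covered, but here we only need the four arithmetic operations defined in the preceding subsection.
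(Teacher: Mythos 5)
Your proof is correct, but it takes a more formal route than the paper, which disposes of the observation in two sentences: it simply notes that without repeated variables the joint dependency structure of all operands is an n-box, so the dependency problem of Observation \ref{obs:dependency_problem} cannot arise and interval arithmetic ``encodes the correct dependency structure.'' That is the same underlying intuition as yours, but the paper never decomposes the expression, never states an induction, and never isolates the exactness of a single binary operation as the engine of the argument. Your structural induction on the syntax tree --- base case a variable or constant, inductive step $\mathsf{f}=\mathsf{g}\diamond\mathsf{h}$ with disjoint variable sets, splitting $\inte{x}$ into $(\inte{x}_G,\inte{x}_H)$ and invoking \eqref{eq:operational_united_set} on the two sub-ranges --- is the standard rigorous proof (essentially Moore's) and supplies exactly the bookkeeping the paper leaves implicit; combined with Theorem \ref{th:fundamental} for the easy inclusion it gives the equality demanded by Definition \ref{def:optimal}. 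What your version buys is a genuine verification that ``no repeated variables'' propagates down the syntax tree as an inductive invariant; what the paper's version buys is brevity at the cost of rigour. Two small points you should make explicit to be fully airtight: the set $\{u\diamond v : u\in\mathsf{G}(\inte{x}_G),\ v\in\mathsf{H}(\inte{x}_H)\}$ coincides with the interval $[\inf,\sup]$ returned by interval arithmetic only because $\diamond$ is continuous on the compact connected 2-box (so the image is itself an interval and the hull adds nothing), and for $\diamond=/$ one must carry the side condition $0\notin\mathsf{H}(\inte{x}_H)$ through the induction.
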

%An \emph{optimal extension} is obtained from an algebraic expression that has no repeated variables.\\
\begin{proof}
Without repeated variables in an expression, the dependency problem of Observation \ref{obs:dependency_problem} cannot occur, because the joint dependency structure of all variables is an n-box. Thus, the rules of interval arithmetic encode the correct dependency structure between all variables.
\end{proof}

An example of interval extensions that are not optimal, i.e. carrying repeated variables, are polynomials.
In a typical polynomial expression the number of repeated variables coincides with its degree minus one. So for instance, a second-degree polynomial has one repeated variable. It is generally hard to rearrange polynomial expressions so that no repeated variables occur.

\subsection{Complex intervals}
 
 \begin{definition}(Rectangular complex interval)\\
 Let $\inte{u}, \inte{v} \in \mathbb{IR}$ be two real intervals. A \emph{rectangular complex interval} is a 2-box in the complex plane, and is defined as:
 $$\inte{z}=\inte{u} + \iunit{i} \inte{v}= \{ u + \iunit{i}  v \ | \ u \in \inte{u} , \ v \in \inte{v} \}.$$%\label{eq:complex_interval}
%so just like a complex number is a point in the complex plane whose coordinates are $(u,v)$, a complex interval is a rectangle whose sides are $(\inte{u},\inte{v})$.
 \end{definition}

 \subsubsection{Amplitude}
Let  $z=u+\iunit{i} v \in \mathbb{C}$ be a complex number, with $u,v \in \mathbb{R}$; then its \emph{amplitude} (or absolute value or modulus) is defined as: $|z| = \sqrt{u^2 + v^2}$.  Thus the amplitude of $z$ can be interpreted as the 2-norm of a point in the complex plane whose coordinates are $(u,v)$.
Now let us introduce the amplitude of a rectangular complex number.

\begin{definition}(Amplitude of a complex interval)\\
Let  $\inte{z}=\inte{u}+ \iunit{i} \inte{v} \in \mathbb{IC}$ be a rectangular complex interval, with $\inte{u}, \inte{v} \in \mathbb{IR}$.  The interval amplitude is defined as:
$$|\inte{z}| = \{ \sqrt{u^2 + v^2} : u \in \inte{u}, \ v \in \inte{v} \}.$$
\end{definition}

\begin{observation}(Computing the interval amplitude)\\
Let $|\inte{z}|$ be the amplitude of $\inte{z} \in \mathbb{IC}$, and let $\text{vert}(\inte{z})$ be the set of four vertices of $\inte{z}$, such that:
$\text{vert}(\inte{z}):= \{ \lo{u} + \iunit{i} \lo{v}, \ \hi{u} + \iunit{i} \lo{v}, \ \lo{u} + \iunit{i} \hi{v}, \ \hi{u} + \iunit{i} \hi{v} \}$. Now, let $A$ be the set of amplitudes at each vertex such that:
$$A := \{ |\lo{u} + \iunit{i} \lo{v}|, \ |\hi{u} + \iunit{i} \lo{v}|, \ |\lo{u} + \iunit{i} \hi{v}|, \ |\hi{u} + \iunit{i} \hi{v}| \}.$$
Then, the amplitude of the complex interval $\inte{z}$ is:
\begin{equation*}
 |\inte{z}| = \left\{\begin{matrix}
  [\min A, \max A],\  & \text{if}\ 0 \notin \inte{z},  \\
  [0,\  \max A],\  &  \text{if}\ 0 \in \inte{z}, \\ 
  [\min A_{1} ,\ \max A ],\ & \text{if}\ 0 \in \text{Re}(\inte{z}), \\
  [\min A_{2} ,\ \max A ],\ & \text{if}\ 0 \in \text{Im}(\inte{z}). \\
\end{matrix}\right.
\end{equation*}
Where, $A_1:= \{ |0+ \iunit{i} \lo{v}|, \ |0 + \iunit{i} \hi{v}| \}$, and $A_2 := \{ |\lo{u} + \iunit{i} 0|, \ |\hi{u} + \iunit{i}0| \}$.
\end{observation}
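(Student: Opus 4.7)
The plan is to treat $|\inte{z}|$ as the image of the continuous function $g(u,v) = \sqrt{u^2+v^2}$ over the compact, connected 2-box $\inte{u} \times \inte{v}$. By the intermediate value theorem the image is itself a closed interval $[\min g, \max g]$, so the task reduces to locating the minimizer and maximizer of $g$ on the box. Since $\sqrt{\cdot}$ is monotone on $[0,\infty)$, it suffices to optimize $h(u,v) = u^2 + v^2$, and the key structural observation is that $h$ is separable: $h(u,v) = u^2 + v^2$, with $u^2$ depending only on $u$ and $v^2$ only on $v$. Hence we may optimize coordinate-wise.

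For the maximum, I would note that $u \mapsto u^2$ is convex on $\mathbb{R}$, so its maximum on $\inte{u} = [\lo{u}, \hi{u}]$ is attained at one of the endpoints $\lo{u}$ or $\hi{u}$, and analogously for $v^2$. Because these choices are independent, the maximizer of $h$ lies at a vertex of the rectangle, and hence $\max g = \max A$ in every case.

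For the minimum, the same separability gives $\min_{u \in \inte{u}} u^2 = 0$ if $0 \in \inte{u}$, and $\min(\lo{u}^2, \hi{u}^2)$ otherwise, with an analogous statement for $v^2$. I would then split into the four combinations according to whether $0$ belongs to $\inte{u}$ and/or $\inte{v}$, which correspond exactly to the four cases in the statement: $0 \in \inte{z}$ iff $0 \in \inte{u}$ and $0 \in \inte{v}$, yielding a minimizer at the origin and $\min g = 0$; $0 \notin \inte{z}$ and $0$ in neither projection yields a minimizer at the vertex of the box nearest the origin, giving $\min A$; $0 \in \mathrm{Re}(\inte{z})$ but $0 \notin \mathrm{Im}(\inte{z})$ forces the minimizer onto the segment $\{0\}\times\inte{v}$, and by convexity of $v^2$ it sits at one of the endpoints of that segment, namely $\min A_1$; the case $0 \in \mathrm{Im}(\inte{z})$ is symmetric and yields $\min A_2$.

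The argument is essentially a routine separable optimization, so I do not anticipate a serious obstacle; the only mild subtlety is verifying that the four listed cases truly cover every geometric configuration without overlap (interpreting them as mutually exclusive, with cases 3 and 4 understood as ``only the real part'' and ``only the imaginary part'' containing $0$), and checking that in each case the expressions $\min A$, $\min A_1$, $\min A_2$ really equal $\sqrt{\min h}$ after the coordinate-wise minimization — which is immediate once one writes down the closest endpoint in each coordinate.
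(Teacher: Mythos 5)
Your proof is correct, and it is self-contained in a way the paper's is not: the paper handles the maximum by citing an external appendix (which establishes that the modulus, being a convex function, attains its maximum over any compact convex set at an extreme point) and then disposes of the minimum in one sentence, which silently glosses over precisely the two edge cases ($0 \in \text{Re}(\inte{z})$ only, or $0 \in \text{Im}(\inte{z})$ only) where the minimizer is \emph{not} a vertex but an interior point of an edge. Your separability argument --- optimizing $u^2$ and $v^2$ independently over $\inte{u}$ and $\inte{v}$, using convexity of $t \mapsto t^2$ for the maximum and the trichotomy ``$0$ inside / $0$ outside'' for each coordinate's minimum --- handles all four cases uniformly and actually justifies the $A_1$ and $A_2$ formulas, which the paper's proof does not. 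What you lose relative to the cited convexity argument is generality: your coordinate-wise reasoning is specific to boxes, whereas the extreme-point argument for the maximum works for arbitrary convex sets (which the paper later needs, since the united sets $Z_h$ are general convex polygons, not boxes). Your observation that the four cases must be read as mutually exclusive (cases three and four meaning \emph{only} the real, respectively imaginary, projection contains $0$) is a genuine point the statement leaves implicit, and is needed for the formula to be consistent.
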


\begin{proof}
In \cite{intervalfourier2020} (Appendix B) a proof for the maximum is provided for any convex set. For the minimum, when $0 \notin \inte{z}$, the proof is equivalent, whilst when $0 \in \inte{z}$, clearly the shortest distance is zero.
\end{proof}
 
 \subsubsection{Argument or phase}
 Let  $z=u + \iunit{i} v \in \mathbb{C}$ be a complex number, with $u,v \in \mathbb{R}$; then its \emph{argument} (or phase), is the angle with the positive real axis, defined as:
 $$\arg z = \arg(u + \iunit{i} v) = \left\{\begin{matrix}
  2 \arctan \left( \frac{v}{\sqrt{u^2 + v^2} + u} \right)  & \text{if}\ u>0,\ v \neq 0, \\
  \pi   &  \text{if}\ u<0,\ v=0, \\ 
  \text{undefined}  & \text{if}\ u=0,\ v=0. \\
\end{matrix}\right.$$.
 
\begin{definition}(Argument of a complex interval)\\
Let  $\inte{z}=\inte{u}+  \iunit{i} \inte{v} \in \mathbb{IC}$ be a rectangular complex interval, with $\inte{u}, \inte{v} \in \mathbb{IR}$.  The interval argument is defined as:
$$\arg \inte{z} = \{ \arg(u + \iunit{i} v) : u \in \inte{u}, \ v \in \inte{v} \}.$$
\end{definition}

\begin{observation}(Computing the interval argument)\label{def:argument}\\
Let $\arg \inte{z}$ be the argument of $\inte{z} \in \mathbb{IC}$, and let $\text{vert}(\inte{z})$ be the set of four vertices of $\inte{z}$, such that:
$\text{vert}(\inte{z}):= \{ \lo{u} + \iunit{i} \lo{v}, \ \hi{u} + \iunit{i} \lo{v}, \ \lo{u} + \iunit{i} \hi{v}, \ \hi{u} + \iunit{i} \hi{v} \}$. Now, let $\Phi$ be the set of arguments of each such vertex such that:
$$\Phi := \{ \arg( \lo{u} + \iunit{i} \lo{v} ), \ \arg( \hi{u} + \iunit{i} \lo{v} ), \ \arg( \lo{u} + \iunit{i} \hi{v} ), \ \arg( \hi{u} + \iunit{i} \hi{v} ) \}.$$
Then the argument of the complex interval $\inte{z}$ is:
\begin{equation*}
 |\inte{z}| = \left\{\begin{matrix}
  [\min \Phi,\ \max \Phi],\  & \text{if}\ 0 \notin \inte{z},  \\
  \text{undefined},\  &  \text{if}\ 0 \in \inte{z}. \\ 
\end{matrix}\right.
\end{equation*}
\end{observation}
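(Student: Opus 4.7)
The plan is to handle the two cases separately. For $0 \in \inte{z}$, the result is immediate: $\arg$ is undefined at the origin, so the defining set $\{\arg(u+\iunit{i}v) : u \in \inte{u},\ v \in \inte{v}\}$ contains an undefined element and is left undefined, matching the statement.

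For $0 \notin \inte{z}$, I would establish the two inclusions separately. For the $\subseteq$ direction, I argue in two steps that the extremes of $\arg$ on $\inte{z}$ are attained at vertices. First, on the open interior the gradient $\nabla \arg = (-v, u)/(u^2+v^2)$ never vanishes, so there is no interior stationary point; combined with compactness of $\inte{z}$ and continuity of $\arg$, the minimum and maximum must lie on the boundary. Second, along each of the four edges one coordinate is fixed: on a horizontal edge $\partial \arg/\partial u = -v/(u^2+v^2)$ has constant sign (or is identically zero when $v=0$, in which case $\arg$ is constant on that edge), and on a vertical edge $\partial \arg/\partial v = u/(u^2+v^2)$ behaves symmetrically. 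Hence $\arg$ restricted to any edge is monotone, so its extremes there occur at its endpoints, which are vertices of $\inte{z}$. Combining the two steps yields $\min_{\inte{z}} \arg = \min \Phi$ and $\max_{\inte{z}} \arg = \max \Phi$. For the reverse inclusion I would invoke the intermediate value theorem: $\inte{z}$ is connected and $\arg$ is continuous, so $\arg \inte{z}$ is a connected subset of $\mathbb{R}$, i.e.\ an interval, which must therefore equal $[\min \Phi, \max \Phi]$.

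The step I expect to be the real obstacle is the branch-cut ambiguity of $\arg$. If $\inte{z}$ avoids the origin but lies in the closed half-plane $u \leq 0$ and spans values of $v$ of opposite sign, then the principal-branch formula given in the excerpt jumps by $2\pi$ across the negative real axis, $\arg$ is not continuous on $\inte{z}$, and the naive $[\min \Phi, \max\Phi]$ recipe can be wrong. A fully rigorous proof therefore needs either the additional hypothesis that $\inte{z}$ is disjoint from the negative real axis, or a reformulation where $\arg$ takes values in $\mathbb{R}/2\pi\mathbb{Z}$ and one works with arc-intervals on the circle; carving out this case correctly is where most of the care is required.
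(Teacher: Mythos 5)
The paper states this observation without any proof at all (unlike the preceding amplitude observation, which at least points to Appendix~B of \cite{intervalfourier2020}), so there is no in-paper argument to compare yours against; you are supplying a proof the author omitted. Your argument is sound on its main line: the gradient $(-v,u)/(u^2+v^2)$ never vanishes away from the origin, so with $0 \notin \inte{z}$ there is no interior extremum; the partial derivatives have constant sign along each edge, so the extremes of $\arg$ over the box are attained at the four vertices; and connectedness plus continuity upgrades $\{\min\Phi,\max\Phi\}$ to the full interval $[\min\Phi,\max\Phi]$. (Two cosmetic points: the displayed equation in the observation should read $\arg\inte{z}$ rather than $|\inte{z}|$, and the paper's case analysis for $\arg$ omits the half-plane $u>0,\ v=0$ and the ray $u=0,\ v\neq 0$, though the half-angle formula covers these.)

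Your closing caveat is not a technicality but a genuine counterexample to the observation as stated, and it deserves to be made explicit. Take $\inte{u}=[-2,-1]$, $\inte{v}=[-1,1]$, so $0 \notin \inte{z}$. With the principal value in $(-\pi,\pi]$ one gets $\min\Phi = \arg(-1-\iunit{i}) = -3\pi/4$ and $\max\Phi = \arg(-1+\iunit{i}) = 3\pi/4$, so the claimed answer is $[-3\pi/4,\,3\pi/4]$, which contains $0$; yet every point of $\inte{z}$ has strictly negative real part, so no point of the box has argument $0$, and the true image is $[3\pi/4,\pi]\cup(-\pi,-3\pi/4]$, not an interval at all. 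Your proof correctly localises the failure to the discontinuity of the principal branch across the non-positive real axis: every step you give is valid whenever $\inte{z}$ is disjoint from that ray, and the observation should either carry that extra hypothesis or be restated with $\arg$ valued in $\mathbb{R}/2\pi\mathbb{Z}$ and the answer read as an arc. For the paper's intended application this matters, since united sets $Z_h$ of the DFT can certainly sit in the left half-plane straddling the real axis.
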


\subsubsection{Arithmetic between complex intervals}
In order to do interval computations between complex intervals the rules of arithmetic must be defined. For the purpose of this work we are only going to need the sum and subtraction between complex intervals, as well as the multiplication of an interval times a complex number.

\begin{definition}(Sum and subtraction)\\
Let $\inte{z}, \inte{w} \in \mathbb{IC}$ be two complex intervals, $\text{vert}(\inte{z})$ and $ \text{vert}(\inte{w})$ be their set of four vertices, and $\diamond = \{+,-\}$ be the sum and subtraction operators. Now let $\text{vert}(\inte{z}\inte{w}) = \text{vert}(\inte{z}) \times \text{vert}(\inte{w})$ be the Cartesian product of all pairs of vertices. Then the arithmetic operation specialises to:
$$\inte{z} \diamond \inte{w} = \{z \diamond w: (z,w) \in \text{vert}(\inte{z}\inte{w}) \}.$$
\end{definition}

Note that the cardinality of the product set is  $ \# \{ \text{vert}(\inte{z}\inte{w}) \} = 16$, so exactly $16$ sums/subtractions need to be computed. More efficient implementations of such operations can be derived noticing that the sum and subtractions only takes place between diagonally opposite pairs of vertices.

\begin{definition}(Multiplication between an interval and a complex number)\\
Let $\inte{x} \in \mathbb{IR}$ be an interval, $z :=u + \iunit{i} v$ be a complex number, and $ \{ u \lo{x}, u \hi{x} \}$ be the set of possible real multiplications. Then, the multiplication between an interval and a complex number is given by:
$$\inte{x} * z = z * \inte{x} =  [\min \{ u \lo{x}, u \hi{x} \} + \iunit{i} v,\  \max \{ u \lo{x}, u \hi{x} \} + \iunit{i} v].$$
\end{definition}

\newpage
\section{Extensions of the discrete Fourier transform}\label{sec:extensions}

\subsection{Interval extension}
Recall that an interval extension is any algebraic expression, whose entries have been replaces with intervals, and that an interval extension is always inclusion monotonic. In this section we define the interval extension considered throughout this manuscript.

\begin{definition}(DFT interval extension)\label{def:interval_dft}\\
Let $\inte{x} \in \mathbb{IR}^N$ be a sequence of real intervals, and let $\mathsf{f}_h: \mathbb{R}^N \rightarrow \mathbb{C}$ be the expression \eqref{eq:algebraic_dft}. Then, the \emph{interval extension} of the discrete Fourier transform at harmonic $ h=0,...,N-1$, $\mathsf{F}_h: \mathbb{IR}^N \rightarrow \mathbb{IC}$ is simply given by: 

\begin{equation}\label{eq:interval_dft}
\mathsf{F}_h(\inte{x}) := \sum_{n=0}^{N-1} \inte{x}_n \  e^{-\textcolor{red}{i} \frac{2 \pi}{N} h n}
\end{equation}
\noindent
where, $N$ is the size of the N-interval (usually a power of two). The evaluation of such an expression results in a rectangular complex interval denoted by: $\inte{z}_h := \mathsf{F}_h(\inte{x})$.
\end{definition}

Note that \eqref{eq:interval_dft} is an optimal interval extension, under no dependency statement between the components of $\inte{x}$. In other words, the interval extension \eqref{eq:interval_dft} is optimal because it has no repeated variables in its expression.

\subsection{United extension and united set} 
\begin{definition}(United extension and united set)\\
The \emph{united extension} of the discrete Fourier transform on the N-interval $\inte{x} \in \mathbb{IR}^N$, is:
\begin{equation}\label{eq:united_dft}
\mathcal{F}_h  (\inte{x})  := \bigcup_{\inte{x}' \in S(\inte{x})}  \left\{ \sum_{n=0}^{N-1} x_n \  e^{-\textcolor{red}{i} \frac{2 \pi}{N} h n}: x \in \inte{x}' \right\}.
\end{equation}
 \noindent
 The image of \eqref{eq:united_dft} is named \emph{united set}, and will be denoted by $ Z_h:=  \mathcal{F}_h (\inte{x}) $, with $Z_h \subset \mathbb{C}$. %Let $\boldsymbol{x}$ and $\boldsymbol{y}$, be two nested intervals, with $\boldsymbol{x} \subseteq \boldsymbol{y}$. 
 \end{definition}

 Note that the united set $Z_h$ is not a box in $\mathbb{C}$; this is why it is not trivial to obtain the exact bounds on the amplitude and phase of the discrete Fourier transform.
Observe that because of the property of inclusion monotonicity of interval extensions, and because the extension defined in \ref{def:interval_dft} is an optimal extension it holds that:
 \begin{equation}\label{eq:inclusion}
 Z_h  \subseteq \inte{z}_h \ \ \ \Rightarrow \ \ \ \text{range}(Z_h) = \inte{z}_h,
 \end{equation}
 
 \noindent
thus the box $\boldsymbol{z}_h$ always circumscribes the united set $Z_h$. In other words, taking the range of the united set $Z_h$ is equivalent to evaluating the interval extension $\mathsf{F}_h$.

 \section{Reaching the united set}\label{sec:reach}
 
\begin{definition}(Non-interactive intervals or boxes)\label{def:noninteractive}\\
Let $\inte{x}, \inte{y} \in \mathbb{IR}$. Then, $\inte{x}$ and $\inte{y}$ are said to be \emph{non-interactive} if their joint set is a 2-box, i.e. if it holds that: $(\inte{x}, \inte{y}) :=  \{ (x,y) : x \in \inte{x}, \ y \in \inte{y} \} $. 
\end{definition}

The marginal projections $\inte{x}$ and $\inte{y}$, retain all the information about the joint structure. 
Also, when no statement about the dependence between $\inte{x}$ and $\inte{y}$, the joint structure of Definition \ref{def:noninteractive} is implied. In other terms, when nothing is stated about the dependence between two intervals, these will be considered non-interactive.

\begin{remark}(Non-interactive independence)\\
Two non-interactive intervals are often referred to as independent in the literature.  Whilst non-interactivity is a form of interval independence, we tend to avoid the term independence because of the probabilistic interpretation of intervals, which can be seen as sets of bounded probability distributions. Under the probabilistic interpretation saying that two intervals are independent may imply statistical independence, which is a much stronger statement than non-interactivity.  If no dependency statement is made, the components of an n-interval are mutually non-interactive.
\end{remark}

\begin{definition}(Linearly dependent intervals)\label{def:linear_dep}\\
Let $\inte{x}, \inte{y} \in \mathbb{IR}$. Then $\inte{x}$ and $\inte{y}$ are said to be \emph{linearly dependent} if their joint set is defined as:
$$(\inte{x},\inte{y})_{\bigtimes}  := \{ (x,y) : x=\text{mid} \inte{x} \pm \tau \ \text{rad}  \inte{x}, \ y=\text{mid} \inte{y} \pm \tau \ \text{rad}  \inte{y},\ \tau \in [-1,1] \},$$
\noindent
where we recall that $\text{mid}\inte{x} = (\hi{x}+\lo{x})/2$, and $\text{rad}\inte{x} =(\hi{x}-\lo{x})/2$. Note that choosing a pair of signs $\pm=\{ +,- \}$,  determines the orientation of the joint set.
\end{definition}

\begin{definition}(Interval diagonal)\\
The joint set of two linearly dependent intervals, denoted by $(\inte{x},\inte{y})_{\bigtimes}$, is called an \emph{interval diagonal}. An interval diagonal can be regarded as the simplest convex polytope in $\mathbb{R}^2$, with only two vertices. The vertices of such polytope can be obtained by setting $\tau=\{-1,1 \}$:
$$\text{vert}(\inte{x},\inte{y})= \{ (\inte{x},\inte{y})_{\bigtimes} : \tau= \{ -1,1 \} \}.$$
\end{definition}

\begin{definition}(Oriented linearly dependent intervals)\label{def:oldi}
Let $\inte{x}, \inte{y} \in \mathbb{IR}$ be two linearly dependent intervals, and $t \in [0,1]$. An orientation can be established within their joint set $(\inte{x},\inte{y})_{\bigtimes}$ simply by ordering each pair $(x,y)$ by increasing $t$. Four orientations can be established for linearly dependent intervals:\\

\begin{tabular}{lc}
Perfect ($+$)     & $(\inte{x},\inte{y})_{\nearrow} =\{ (x,y) : x=\lo{x} + t (\hi{x}-\lo{x}), \ y=\lo{y} + t (\hi{y}-\lo{y}),\ t \in [0,1] \}$, \\
Perfect ($-$)    & $(\inte{x},\inte{y})_{\swarrow}=\{ (x,y) : x=\hi{x} - t (\hi{x}-\lo{x}), \ y=\hi{y} - t (\hi{y}-\lo{y}),\ t \in [0,1] \}$, \\
Opposite ($+$)  & $(\inte{x},\inte{y})_{\nwarrow}= \{ (x,y) : x=\hi{x} - t (\hi{x}-\lo{x}), \ y=\lo{y} + t (\hi{y}-\lo{y}),\ t \in [0,1] \}$, \\
Opposite ($-$) & $(\inte{x},\inte{y})_{\searrow}= \{ (x,y) : x=\lo{x} + t (\hi{x}-\lo{x}), \ y=\hi{y} - t (\hi{y}-\lo{y}),\ t \in [0,1] \}$. \\
\end{tabular}
\end{definition}

\begin{definition}(Oriented interval diagonal)\\
The joint set of two oriented linearly dependent intervals can be given the interpretation of an \emph{oriented interval diagonal}, i.e. a convex set with two ordered vertices. Given Definition \ref{def:oldi} first and second vertex of an oriented diagonal can be obtained setting $t=0$ and $t=1$ respectively.

\end{definition}

\begin{definition}(Linear interval dependence)\\
The joint set $(\inte{x},\inte{y})_{\bigtimes}$ of Definition \ref{def:linear_dep} expresses linear dependence between intervals, a.k.a. \emph{linear interval dependence}. 
\end{definition}

\begin{definition}(Strong linear interval dependence)\label{def:strong_lin_dep}\\
Let $\inte{x}_1, \inte{x}_2 \in \mathbb{IR}$, and let $u, v \in  \mathbb{R}$. The two intervals $\inte{x}_1$ and $\inte{x}_2$ are said to have \emph{strong linear interval dependence}, if there exists a generator $\inte{x} \in \mathbb{IR}$, such that:
$$(\inte{x}_1,\inte{x}_2) = (u \inte{x} , v \inte{x}) = \{ (x_1,x_2) : x_1 = u x,\ x_2=v x,\ x \in \inte{x} ,\ u,v \in \mathbb{R}  \}.$$
Because the joint set $(u \inte{x} , v \inte{x})$ is determined by its generator $\inte{x}$, such joint set can be denoted by: $\ _{uv}\inte{x} := (u \inte{x} , v \inte{x})$. The vertices of the diagonal joint set are given by: $ \text{vert} ( _{uv} \inte{x}) = \{ ( u \lo{x}, v \lo{x} ), ( u \hi{x}, v \hi{x} ) \} $. 
\end{definition}

\begin{remark}
Note that because strong linear dependence is a more stringent condition than linear dependence, any two linearly dependent interval may not satisfy Definition \ref{def:strong_lin_dep}.
\end{remark}

\begin{remark}
Observe that the orientation of the interval diagonal induced by strong linear dependence is fully determined by the sign of $u$ and $v$. This holds under the classical interpretation that the interval $\inte{x} \in \mathbb{IR}$ is oriented like the real line $\mathbb{R}$. Improper interval analysis and Kraucher arithmetic have challenged this interpretation \cite{kaucher1980interval}.
\end{remark}

\begin{example}(Addend of the Fourier series)\\
The addend of the Fourier series is an example of an oriented interval diagonal in the complex plane.  The Fourier coefficients are $w_{hn}=u_{hn}+iv_{hn}=u_{n}+iv_{n}$.
Let $h=1$, $N=8$, then $w_{1,0} = e^{0}=1+i0$ with $u_0=1,\ v_0=0$; $w_{1,1} = e^{-i \frac{2\pi}{8}}=\cos \frac{\pi}{4}  - i \sin \frac{\pi}{4}$, with $u_1 = \cos \frac{\pi}{4}, \ v_1= \sin \frac{\pi}{4}$; $w_{1,2} = e^{-i \frac{2\pi}{8} 2}=\cos \frac{\pi}{2}  - i \sin \frac{\pi}{2}$, with $u_2 = 0, \ v_2= 1$.

\vspace{1em}
\begin{tikzpicture}

\node[] (xx0) at (-3.5,5) {$\inte{x}_{0} = [-2,0] $}; 
\node[] (X0r) at (-3.5,4) {$\inte{x}_{0} \cdot u_0 $}; 
\node[] (X0r) at (-0.9,3.75) {$ + $}; 

\node[] (xx1) at (1.3,5) {$\inte{x}_{1} = [1,3] $}; 
\node[] (X0r) at (1.3,4) {$\inte{x}_{1} \cdot u_1 $}; 
\node[] (X0r) at (3.3,3.75) {$ + $}; 

\node[] (xx2) at (6,5) {$\inte{x}_{2} = [-4,-2] $}; 
\node[] (X0r) at (6,4) {$\inte{x}_{2} \cdot u_2 $}; 
\node[] (X0r) at (8.3,3.75) {$ + \ \ ...$};

\node[] (i) at (-4.5,3.5) {$\textcolor{red}{i}$}; 
\node[] (X0r) at (-3.5,3.5) {$\inte{x}_{0} \cdot v_0 $}; 
\node[] (i) at (0.3,3.5) {$\textcolor{red}{i}$}; 
\node[] (X0r) at (1.3,3.5) {$\inte{x}_{1} \cdot v_1 $}; 
\node[] (i) at (5,3.5) {$\textcolor{red}{i}$}; 
\node[] (X0r) at (6,3.5) {$\inte{x}_{2} \cdot v_2 $};

\begin{axis}[name=axis1,width=4cm, height=4cm, at={(-.3\linewidth,0)},
  axis lines=middle,
  grid=major,
  xmin=-4,
  xmax=2,
  ymin=-2,
  ymax=4,
  xlabel=Re,
  ylabel=Im,
  xtick={-4,-3,...,2},
  ytick={-2,-1,...,4},
  ticklabel style = {font=\tiny},
  x label style = {font=\tiny},
  y label style = {font=\tiny},
]

% \addplot[-{Latex[length=2mm, width=2mm]}, blue, very thick] coordinates {(-2,0) (0,0)};
\addplot[{Circle[length=1.mm, width=1.mm]}-{Diamond[length=1.5mm, width=1.5mm]}, blue, very thick] coordinates {(-2,0) (0,0)};
\end{axis}  

\node[] (+) at (-0.9,1.2) {{\Large +}};   

\begin{axis}[name=axis2,width=4cm, height=4cm,at={(0\linewidth,0)},
  axis lines=middle,
  grid=major,
  xmin=-4,
  xmax=2,
  ymin=-2,
  ymax=4,
  xlabel=Re,
  ylabel=Im,
  xtick={-4,-3,...,2},
  ytick={-2,-1,...,4},
  ticklabel style = {font=\tiny},
  x label style = {font=\tiny},
  y label style = {font=\tiny},
]

% \addplot[-{Latex[length=2mm, width=2mm]}, blue, very thick] coordinates {(-2.12132,0.707107) (-0.707107,2.12132)};
\addplot[{Circle[length=1.mm, width=1.mm]}-{Diamond[length=1.5mm, width=1.5mm]}, blue, very thick] coordinates {(-2.12132,0.707107) (-0.707107,2.12132)};

\end{axis} 

\node[] (+) at (3.3,1.2) {{\Large +}};

\begin{axis}[name=axis3,width=4cm, height=4cm,at={(.3\linewidth,0)},
  axis lines=middle,
  grid=major,
  xmin=-4,
  xmax=2,
  ymin=-2,
  ymax=4,
  xlabel=Re,
  ylabel=Im,
  xtick={-4,-3,...,2},
  ytick={-2,-1,...,4},
  ticklabel style = {font=\tiny},
  x label style = {font=\tiny},
  y label style = {font=\tiny},
]

% \addplot[-{Latex[length=2mm, width=2mm]}, blue, very thick] coordinates {(0,2) (0,4)};
\addplot[{Circle[length=1.mm, width=1.mm]}-{Diamond[length=1.5mm, width=1.5mm]}, blue, very thick] coordinates {(0,2) (0,4)};
\end{axis}

\node[] (+) at (8.3,1.2) {{\Large +} \ \  ...};  
\end{tikzpicture}
\end{example}

\begin{definition}(Minkowski addition)\label{def:minkowski}\\
Let $A, B \in \mathcal{C}$ be any two convex subsets of a real vector space, where $\mathcal{C} \subseteq \mathbb{R}^m$ is such a space of convex subsets. Then the \emph{Minkowski addition} is given by:
$$A \oplus B = \bigcup_{A,B \in \mathcal{C}} \{ a+b : a \in A, b \in B \}.$$
\end{definition}

The following is a well-known result in computational geometry, which leads to a practical implementation of the Minkowski sum. 

\begin{proposition}\label{prop:minkowski}
The sum of two convex polytopes in the plane: $A, B \in \mathcal{C} \subseteq \mathbb{R}^2$ with $m, n$ vertices, is a convex polytope with at most $m+n$ vertices. 
\end{proposition}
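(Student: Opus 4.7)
The plan is to first show that $A \oplus B$ is itself convex and compact, and then to identify each edge of $A \oplus B$ with an edge of $A$ or of $B$, so that a counting argument yields the bound $m+n$. Convexity is immediate from Definition \ref{def:minkowski}: if $p_i = a_i + b_i$ for $i=1,2$ and $\lambda \in [0,1]$, then
$$\lambda p_1 + (1-\lambda) p_2 = \bigl( \lambda a_1 + (1-\lambda)a_2 \bigr) + \bigl( \lambda b_1 + (1-\lambda) b_2 \bigr) \in A \oplus B$$
by convexity of $A$ and $B$ applied componentwise. Compactness follows because the sum of two bounded closed sets in $\mathbb{R}^2$ is bounded and closed, so $A \oplus B$ is a compact convex subset of the plane.

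Second, I would invoke the support function $h_K(\theta) := \max_{x \in K} \langle x, (\cos\theta, \sin\theta)\rangle$ together with the identity $h_{A \oplus B}(\theta) = h_A(\theta) + h_B(\theta)$, which follows directly from the definition of $\oplus$. For a convex polytope $K \subset \mathbb{R}^2$ with $k$ vertices (and therefore $k$ edges), the set of unit outer normals at which $h_K$ is attained along an entire edge, rather than at a single vertex, is finite of cardinality $k$. Since the maximizer in direction $\theta$ for $A \oplus B$ is the sum of the maximizers for $A$ and $B$, the maximizer set for $A \oplus B$ is an edge precisely when the corresponding set is an edge for $A$ or for $B$ (or both).

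Third, I would sweep the outer normal direction $\theta$ once around $[0,2\pi)$ and merge the finite set of edge-normals of $A$ with those of $B$ into a single cyclic list. Between two consecutive normals in this merged list the boundary of $A \oplus B$ is a single straight segment, which is a translate of the corresponding edge of $A$ or $B$; when a normal is shared by $A$ and $B$, the two parallel edges combine into one longer edge of $A \oplus B$. The merged list has cardinality at most $m + n$, with equality failing only in the presence of parallel edges, which yields the claimed bound on edges and hence on vertices.

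The main obstacle is justifying the support-function characterization of edges rigorously enough to conclude that the boundary of $A \oplus B$ decomposes exactly into the edges listed by the angular sweep, and in particular handling the parallel-edge degeneracy so that the bound is established as an inequality rather than an equality. An alternative route would be induction on $m+n$, removing a vertex of $A$ or $B$ of maximal $x$-coordinate and tracking how the support polygon changes, but the support-function sweep is the cleanest way to make the combinatorial content transparent.
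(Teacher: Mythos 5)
Your argument is correct, and it is essentially a rigorous version of the only justification the paper itself offers: the paper states Proposition \ref{prop:minkowski} as a well-known fact of computational geometry and, in lieu of a proof, quotes the informal Wikipedia procedure of merging the two edge sequences ordered by polar angle. Your support-function sweep ($h_{A\oplus B}=h_A+h_B$, edges of $A\oplus B$ occurring exactly at directions that are edge normals of $A$ or of $B$, hence at most $m+n$ edges and vertices, with parallel edges merging) is precisely the standard way to make that informal merge argument precise, so I would classify it as the same approach rather than a different route. Two small points of care: the straight segments of the boundary occur \emph{at} the normals in the merged list (between consecutive normals the exposed face is a single vertex, namely a sum of a vertex of $A$ and a vertex of $B$), not between them as your phrasing suggests; and since the paper applies this proposition to degenerate polytopes --- single points and the two-vertex ``interval diagonals'' --- you should note that a segment contributes two directed edges (one for each orientation of its normal) to the cyclic sweep, so the count $k$ vertices $=k$ boundary edges still holds in those degenerate cases.
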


\begin{remark}
Note that because of Proposition \ref{prop:minkowski} the coordinates of the vertices resulting from the Minkowski addition can be computed in linear time $O(m+n)$. The intuitive procedure is provided in the form of text in the Wikipedia article \cite{wiki:minkowski_addition}.
\end{remark}

%the Wikipedia page of the Minkowski addition: \url{https://en.wikipedia.org/wiki/Minkowski_addition}, with deep link: \url{#Algorithms_for_computing_Minkowski_sums}. 

Here is the extract of the Wikipedia article on December 2021: \\
\textit{For two convex polygons $P$ and $Q$ in the plane with $m$ and $n$ vertices, their Minkowski sum is a convex polygon with at most $m + n$ vertices and may be computed in time $O(m + n)$ by a very simple procedure, which may be informally described as follows. Assume that the edges of a polygon are given and the direction, say, counterclockwise, along the polygon boundary. Then it is easily seen that these edges of the convex polygon are ordered by polar angle. Let us merge the ordered sequences of the directed edges from $P$ and $Q$ into a single ordered sequence $S$. Imagine that these edges are solid arrows which can be moved freely while keeping them parallel to their original direction. Assemble these arrows in the order of the sequence $S$ by attaching the tail of the next arrow to the head of the previous arrow. It turns out that the resulting polygonal chain will in fact be a convex polygon which is the Minkowski sum of $P$ and $Q$.}
%\textit{If one polygon is convex and another one is not, the complexity of their Minkowski sum is $O(nm)$. If both of them are nonconvex, their Minkowski sum complexity is $O((mn)^2)$.}

\begin{remark}
Note that as a consequence of Proposition \ref{prop:minkowski}, the Minkowski addition is convexity preserving.
\end{remark}

\begin{proposition}(Minkowski addition and united extensions)\label{prop:minkowski2}\\
Let $\mathsf{f}: \mathbb{R}^2 \rightarrow \mathbb{R}$ be the sum function, such that $\mathsf{f}(x,y)=x+y$, and $\inte{x}, \inte{y} \in \mathbb{IR}$ be two dependent intervals, whose joint set is convex, such that $(\inte{x},\inte{y}) \in \mathcal{C}$. Then, the united extension of $\mathsf{f}$, $\mathcal{F}_{+}(\inte{x},\inte{y})$ is equivalent to the Minkowski sum, such that:
$$\mathcal{F}_{+}(\inte{x},\inte{y}) \equiv \inte{x} \oplus \inte{y},$$
thus the united set of $\mathcal{F}_{+}$ can be reached by means of the Minkowski sum. 
%The united extension of the sum function coincides with the Minkowski sum.
\end{proposition}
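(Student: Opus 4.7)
The plan is to establish the equality directly by unfolding both set-theoretic definitions on the given convex joint set and observing that the resulting set-builder expressions coincide.

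First I would specialise Definition \ref{def:united_extension} to the binary sum $\mathsf{f}(x,y)=x+y$ acting on the convex joint set $(\inte{x},\inte{y})\in\mathcal{C}$. Because every point $(x,y)$ of the joint set is captured by its degenerate singleton sub-box $\{(x,y)\}\in S((\inte{x},\inte{y}))$, and conversely every sub-box contributes only pairs that already lie in the joint set, the nested union collapses to
$$\mathcal{F}_+(\inte{x},\inte{y}) \;=\; \{x+y : (x,y) \in (\inte{x},\inte{y})\}.$$
Next I would unfold Definition \ref{def:minkowski} on the same data. Reading the Minkowski sum $\inte{x}\oplus\inte{y}$ as the set of all sums $a+b$ with pairs $(a,b)$ drawn from the prescribed joint convex set --- the natural reading once a dependency structure is imposed on the two intervals --- yields exactly the same collection, and the equality is then immediate by matching set-builder descriptions elementwise.

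The main obstacle is notational rather than mathematical. Definition \ref{def:minkowski} is phrased for two generic convex sets $A,B$ without any explicit dependency, so one must be careful to apply $\oplus$ to the joint convex set $(\inte{x},\inte{y})$ and not to the marginal projections alone, which would lose the dependency information whenever $(\inte{x},\inte{y})$ is a proper convex subset of $\inte{x}\times\inte{y}$ (as with the oriented interval diagonals arising from the Fourier addends in the preceding example). In the non-interactive case where $(\inte{x},\inte{y})=\inte{x}\times\inte{y}$ the identification reduces to the classical Minkowski addition of two independent convex sets. Once this clarification is in place, the proposition serves its intended role in the sequel: together with Proposition \ref{prop:minkowski} it realises the united set $Z_h$ as an iterated Minkowski sum of oriented interval diagonals in the complex plane, computable with a linear vertex count per step.
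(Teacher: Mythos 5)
Your proof follows the same overall route as the paper's: unfold Definition \ref{def:united_extension} for the binary sum and match it against Definition \ref{def:minkowski}. The paper's own proof stops at asserting that the two set-builder expressions ``are equivalent''; you go further in two useful ways. First, you justify the collapse of the nested union over sub-boxes to $\{x+y : (x,y)\in(\inte{x},\inte{y})\}$ via singleton sub-boxes, which the paper leaves implicit. Second, and more importantly, you put your finger on the genuine weak point of the statement: Definition \ref{def:minkowski} as written is the all-pairs sum of two sets, so $\inte{x}\oplus\inte{y}$ computed from the marginal intervals is $\{x+y : x\in\inte{x},\ y\in\inte{y}\}$, which strictly contains the united set whenever the joint set is a proper convex subset of the 2-box --- for oppositely oriented linearly dependent copies of $[0,1]$ the united set is the singleton $\{1\}$ while the marginal Minkowski sum is $[0,2]$. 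Your re-reading of $\oplus$ as acting on the prescribed joint set makes the proposition true, at the cost of departing from the letter of Definition \ref{def:minkowski}; the paper's proof simply does not confront this. One caveat on your closing remark: in Theorem \ref{theor:main} the operands of $\oplus$ are the diagonals ${}_{uv}\inte{x}_{hn}$ viewed as mutually non-interactive convex subsets of $\mathbb{C}$, so the dependency lives \emph{inside} each operand and the standard all-pairs Minkowski sum is exactly what is wanted there; the version of the proposition actually used downstream is the non-interactive one you mention at the end, not the dependent-marginals reading.
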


\begin{proof}
Invoking the Definition \ref{def:united_extension}, the united extension of the sum function applied to an arbitrary set $(\inte{x}, \inte{y})$ is given by:
%$$\mathcal{F}_{+}(\inte{x},\inte{y}) = \bigcup_{(\inte{x},\inte{y}) \in \mathcal{C}} \{ x+y: (x,y) \in (\inte{x}, \inte{y})  \},$$
$$\mathcal{F}_{+}(\inte{x},\inte{y}) = \bigcup_{\inte{x}', \inte{y}' \in S(\inte{x},\inte{y})} \{ x+y: x \in \inte{x}', \ y \in \inte{y}'  \},$$
which is equivalent to Definition \ref{def:minkowski}, when $(\inte{x},\inte{y})$ is a convex subset of $\mathbb{R}^2$. Therefore the Minkowski sum coincides to evaluating  the united extension of the sum function. 
\end{proof}

\begin{corollary}
The Minkowski sum of two linearly dependent intervals $\inte{x}, \inte{y} \in \mathbb{IR}$, whose joint set is: $(\inte{x},\inte{y}) = (\inte{x},\inte{y})_{\bigtimes}$, yields the united set under the sum function.
\end{corollary}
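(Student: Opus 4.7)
The plan is to invoke Proposition \ref{prop:minkowski2} as the sole engine of the proof, after verifying that its convexity hypothesis is met. Proposition \ref{prop:minkowski2} asserts that whenever the joint set $(\inte{x},\inte{y})$ is a convex subset of $\mathbb{R}^2$, the Minkowski sum $\inte{x} \oplus \inte{y}$ coincides with the united extension $\mathcal{F}_{+}(\inte{x},\inte{y})$, whose image is by Definition \ref{def:united_set} the united set under the sum function. Hence the only substantive step is to check that the interval diagonal $(\inte{x},\inte{y})_{\bigtimes}$ qualifies.

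First I would verify convexity of $(\inte{x},\inte{y})_{\bigtimes}$ directly from Definition \ref{def:linear_dep}. Once a sign choice is fixed, the joint set is the image of the interval $[-1,1]$ under the affine map $\tau \mapsto (\text{mid}\,\inte{x} + \tau\,\text{rad}\,\inte{x},\ \text{mid}\,\inte{y} \pm \tau\,\text{rad}\,\inte{y})$. Since $[-1,1]$ is convex and affine images of convex sets are convex, $(\inte{x},\inte{y})_{\bigtimes}$ is a line segment, i.e.\ a two-vertex convex polytope in $\mathbb{R}^2$, consistent with the remark that follows Definition \ref{def:linear_dep}.

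With convexity in hand, I would apply Proposition \ref{prop:minkowski2} with the specific joint set $(\inte{x},\inte{y}) = (\inte{x},\inte{y})_{\bigtimes}$, obtaining $\mathcal{F}_{+}(\inte{x},\inte{y})_{\bigtimes} \equiv \inte{x} \oplus \inte{y}$. By Definition \ref{def:united_extension}, the left-hand side is precisely the union, over all sub-boxes, of sums $x+y$ ranging over $(x,y) \in (\inte{x},\inte{y})_{\bigtimes}$, which is the united set of the sum function on the diagonal joint set. This closes the argument.

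I do not anticipate a genuine obstacle: the corollary is a direct specialisation of Proposition \ref{prop:minkowski2}, and the only content beyond a pointer to that proposition is the one-line convexity check. The only subtlety worth flagging is the sign ambiguity in Definition \ref{def:linear_dep}: one must fix a consistent orientation of the diagonal (equivalently, pick one of the two pairs of opposite vertices as the two endpoints) before computing the Minkowski sum, but this choice affects only the labelling of endpoints and not the convexity property or the resulting united set.
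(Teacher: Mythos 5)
Your proposal is correct and follows essentially the same route as the paper: the paper's own proof simply appeals to Proposition \ref{prop:minkowski2} and notes that the diagonal joint set $(\inte{x},\inte{y})_{\bigtimes}$ is convex. Your explicit verification of convexity (as the affine image of $[-1,1]$) is a welcome elaboration of a step the paper leaves implicit, but it does not change the argument.
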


\begin{proof}
The proof is identical to the proof of Proposition \ref{prop:minkowski2}, and it holds because the joint set of two linearly dependent intervals is convex: $(\inte{x},\inte{y})_{\bigtimes} \in \mathcal{C}$.
\end{proof}

\begin{example}(Minkowski addition between two diagonals)\\
\begin{tikzpicture}
\begin{axis}[name=axis1,width=5cm, height=5cm, at={(-.35\linewidth,0)},
  axis lines=middle,
 grid=major,
  xmin=-5,
  xmax=1,
  ymin=-1,
  ymax=3,
  xlabel=Re,
  ylabel=Im,
  xtick={-5,-4,...,1},
  ytick={-1,0,...,3},
  ticklabel style = {font=\tiny},
  x label style = {font=\tiny, at={(axis description cs:1.1,0.1)},anchor=north},
  y label style = {font=\tiny, at={(axis description cs:0.1,1.1)},anchor=east},
]

% \addplot[-{Latex[length=2mm, width=2mm]}, blue, very thick] coordinates {(-2,0) (0,0)};
\addplot[blue, very thick] coordinates {(-2,0) (0,0)};
\addplot [fill=blue] (-2,0) circle (1pt);
\addplot [fill=blue] (0,0) circle (1pt);
\end{axis} 

\node[] (+) at (-0.7,1.5) {{\Large +}};

\begin{axis}[name=axis2,width=5cm, height=5cm,at={(0\linewidth,0)},
  axis lines=middle,
 grid=major,
  xmin=-5,
  xmax=1,
  ymin=-1,
  ymax=3,
  xlabel=Re,
  ylabel=Im,
  xtick={-5,-4,...,1},
  ytick={-1,0,...,3},
  ticklabel style = {font=\tiny},
  x label style = {font=\tiny, at={(axis description cs:1.1,0.1)},anchor=north},
  y label style = {font=\tiny, at={(axis description cs:0.1,1.1)},anchor=east},
]

% \addplot[-{Latex[length=2mm, width=2mm]}, blue, very thick] coordinates {(-2.12132,0.707107) (-0.707107,2.12132)};
\addplot[{Circle[length=1.mm, width=1.mm, color=red]}-{Diamond[length=1.5mm, width=1.5mm, color=green!80!black]}, blue, very thick] coordinates {(-2.12132,0.707107) (-0.707107,2.12132)};
% \addplot [red, fill=red] (-2.12132,0.707107) circle (1pt);
% \addplot [green, fill=green] (-0.707107,2.12132) circle (1pt);
\end{axis}

\node[] (=) at (4.2,1.5) {{\Large =}};

\begin{axis}[name=axis3,width=5cm, height=5cm,at={(.35\linewidth,0)},
  axis lines=middle,
  grid=major,
  xmin=-5,
  xmax=1,
  ymin=-1,
  ymax=3,
  xlabel=Re,
  ylabel=Im,
  xtick={-5,-4,...,1},
  ytick={-1,0,...,3},
  ticklabel style = {font=\tiny},
  x label style = {font=\tiny, at={(axis description cs:1.1,0.1)},anchor=north},
  y label style = {font=\tiny, at={(axis description cs:0.1,1.1)},anchor=east},
]

\addplot[blue, thick,fill=gray!30,fill opacity=0.6] (0.707107, 2.12132) -- (-2.707107, 2.12132) -- (-4.12132, 0.707107)  --  (-2.12132, 0.707107)    --  cycle;
% \addplot [fill=green] (0.707107, 2.12132) circle (1pt);
% \addplot [fill=green] (-2.707107, 2.12132) circle (1pt);
\addplot[{Diamond[length=1.5mm, width=1.5mm, color=green!80!black]}-, blue, very thick] coordinates {(-2.78, 2.12132) (-2.5, 2.12132)};
\addplot[{Diamond[length=1.5mm, width=1.5mm, color=green!80!black]}-, blue, very thick] coordinates {(0.79, 2.12132) (0.68, 2.12132)};
\addplot [fill=red] (-4.12132, 0.707107) circle (1pt);
\addplot [fill=red] (-2.12132, 0.707107) circle (1pt);
\end{axis}

\end{tikzpicture} 
\end{example}

\begin{example}(Minkowski addition between a convex set and a diagonal)\\
\begin{tikzpicture}

\begin{axis}[name=axis3,width=5cm, height=5cm,at={(-.35\linewidth,0)},
  axis lines=middle,
  grid=major,
  xmin=-5,
  xmax=1,
  ymin=0,
  ymax=6.2,
  xlabel=Re,
  ylabel=Im,
  xtick={-5,-4,...,1},
  ytick={0,1,...,6},
  ticklabel style = {font=\tiny},
  x label style = {font=\tiny, at={(axis description cs:1.1,0.1)},anchor=north},
  y label style = {font=\tiny, at={(axis description cs:0.1,1.1)},anchor=east},
]

\addplot[blue, thick,fill=gray!30,fill opacity=0.6] (0.707107, 2.12132) -- (-2.707107, 2.12132) -- (-4.12132, 0.707107)  --  (-2.12132, 0.707107)    --  cycle;
\addplot [fill=blue] (0.707107, 2.12132) circle (1pt);
\addplot [fill=blue] (-2.707107, 2.12132) circle (1pt);
\addplot [fill=blue] (-4.12132, 0.707107) circle (1pt);
\addplot [fill=blue] (-2.12132, 0.707107) circle (1pt);
\end{axis}

\node[] (+) at (-0.7,1.5) {{\Large +}};  

\begin{axis}[name=axis3,width=5cm, height=5cm,at={(0\linewidth,0)},
  axis lines=middle,
  grid=major,
  xmin=-5,
  xmax=1,
  ymin=0,
  ymax=6.2,
  xlabel=Re,
  ylabel=Im,
  xtick={-5,-4,...,1},
  ytick={0,1,...,6},
  ticklabel style = {font=\tiny},
  x label style = {font=\tiny, at={(axis description cs:1.1,0.1)},anchor=north},
  y label style = {font=\tiny, at={(axis description cs:0.1,1.1)},anchor=east},
]

\addplot[blue, very thick] coordinates {(0,2) (0,4)};
% \addplot+[mark=square, style={solid, fill=gray}, mark size=1pt, mark options={solid}] (0, 2);
% \addplot+[only marks,mark=square, mark options={scale=1, solid}, text mark as node=true] coordinates {(0,2)};
% \addplot [red, mark=square, mark size=1pt] coordinates {(0,2)};
% \addplot [thin, black, fill=green!90!black] (0, 4) circle (1pt);
\addplot[{Circle[length=1.mm, width=1.mm, color=red]}-{Diamond[length=1.5mm, width=1.5mm, color=green!80!black]}, blue, very thick] coordinates {(0,2) (0,4)};
\end{axis}

\node[] (=) at (4.2,1.5) {{\Large =}};

\begin{axis}[name=axis3,width=5cm, height=5cm,at={(.35\linewidth,0)},
  axis lines=middle,
grid=major,
  xmin=-5,
  xmax=1,
  ymin=0,
  ymax=6.2,
  xlabel=Re,
  ylabel=Im,
  xtick={-5,-4,...,1},
  ytick={0,1,...,6},
  ticklabel style = {font=\tiny},
  x label style = {font=\tiny, at={(axis description cs:1.1,0.1)},anchor=north},
  y label style = {font=\tiny, at={(axis description cs:0.1,1.1)},anchor=east},
]

\addplot[blue, thick,fill=gray!30,fill opacity=0.6] (-0.707107, 6.12132) -- (-2.707107, 6.12132) -- (-4.12132, 4.707107) --(-4.12132, 2.707107) -- (-2.12132, 2.707107) -- (-0.707107, 4.12132)   --  cycle;
\addplot [fill=green] (-0.707107, 6.12132) circle (1pt);
\addplot [fill=green] (-2.707107, 6.12132) circle (1pt);
\addplot [fill=red]  (-4.12132, 4.707107) circle (1pt);
%\addplot [red, mark=square, mark size=1pt] coordinates {(-4.12132, 4.707107)};
\addplot [fill=red] (-4.12132, 2.707107) circle (1pt);
%\addplot [red, mark=square, mark size=1pt] coordinates {(-4.12132, 2.707107)};
\addplot [fill=red] (-2.12132, 2.707107) circle (1pt);
%\addplot [red, mark=square, mark size=1pt] coordinates {(-2.12132, 2.707107)};
\addplot [fill=green] (-0.707107, 4.12132) circle (1pt);
\addplot [fill=red] (-2.12132, 4.707107) circle (1pt);
%\addplot [red, mark=square, mark size=1pt] coordinates {(-2.12132, 4.707107)};
\addplot [fill=green] (-2.707107, 4.12132) circle (1pt);

% \addplot[{Circle[length=1.mm, width=1.mm, color=red]}-{Diamond[length=1.5mm, width=1.5mm, color=green!80!black]}, blue, very thick] coordinates {(-2.12132,0.707107) (-0.707107,2.12132)};

\end{axis}
\end{tikzpicture} 
\end{example}

We are ready to state the main result of this work.

 \begin{theorem}(Main result)\label{theor:main} \\
Let $\inte{x} \in \mathbb{IR}^N$ be an n-interval, whose components are non-interactive (N-box). Let $\mathsf{F}_h$ be the discrete 
Fourier transform at a given harmonic $h = 0,1,...,N-1$, and let $Z_h$ be the united set, obtained applying the united extension $\mathcal{F}_h$ on to $\inte{x}$. Then, the united set $Z_h$ is a convex polytope in $\mathbb{C}$. Thus, its vertices are computable by a chain of Minkowski additions.%, with at most $2N$ vertices. 
\end{theorem}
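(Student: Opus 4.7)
The plan is to recognize $Z_h$ as an iterated Minkowski sum of $N$ line segments in $\mathbb{C}$ and then to invoke the convexity results established in the previous propositions.

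\textbf{Step 1 (single-variable decomposition).} For each $n=0,\ldots,N-1$, introduce the addend set
$$S_n := \bigl\{\, x_n\, w_{hn} : x_n \in \inte{x}_n \,\bigr\} \subset \mathbb{C}.$$
Since $w_{hn}$ is a fixed complex constant and $\inte{x}_n=[\lo{x}_n,\hi{x}_n]$, the set $S_n$ is the closed line segment from $\lo{x}_n w_{hn}$ to $\hi{x}_n w_{hn}$. This is precisely the oriented interval diagonal of Definition \ref{def:oldi}, arising from strong linear interval dependence (Definition \ref{def:strong_lin_dep}) between its real and imaginary parts through the common generator $\inte{x}_n$. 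Viewed in $\mathbb{R}^2 \cong \mathbb{C}$, each $S_n$ is therefore a convex polytope with at most two vertices.

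\textbf{Step 2 ($Z_h$ as a Minkowski sum).} Because $\inte{x}$ is an N-box, the components $x_0,\ldots,x_{N-1}$ may be chosen independently. Consequently
$$Z_h = \Bigl\{\, \sum_{n=0}^{N-1} x_n\, w_{hn} : x_n \in \inte{x}_n \,\Bigr\} = \Bigl\{\, \sum_{n=0}^{N-1} s_n : s_n \in S_n \,\Bigr\} = S_0 \oplus S_1 \oplus \cdots \oplus S_{N-1},$$
the last equality being the iterated form of Minkowski addition (Definition \ref{def:minkowski}). This is the step at which Proposition \ref{prop:minkowski2} is applied, iterated now in the complex plane rather than on the real line.

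\textbf{Step 3 (convex polytope with polynomially many vertices).} Proposition \ref{prop:minkowski} states that the Minkowski sum of two convex polygons with $m$ and $n$ vertices in the plane is a convex polygon with at most $m+n$ vertices, computable in $O(m+n)$ time by merging ordered edge sequences. Setting $T_0 := S_0$ and $T_{k+1} := T_k \oplus S_{k+1}$, a straightforward induction on $k$ then shows that every $T_k$ is a convex polygon, and in particular $Z_h = T_{N-1}$ is a convex polytope in $\mathbb{C}$ with at most $2N$ vertices, produced by exactly the chain of Minkowski additions asserted in the theorem, in total time $O(N^2)$.

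\textbf{Main obstacle.} The delicate point is Step 2: Proposition \ref{prop:minkowski2} is formulated for a single pair of real intervals, so extending it to an iterated sum of $\mathbb{C}$-valued addends requires verifying that at every stage the partial sum $T_k$ is non-interactive with the next addend $S_{k+1}$. This is guaranteed by the N-box hypothesis via Definition \ref{def:noninteractive}: because the $x_n$ are mutually non-interactive, the partial sum built from $x_0,\ldots,x_k$ carries no joint constraint with $x_{k+1}$, so the joint set $(T_k, S_{k+1})$ factors as a product in $\mathbb{R}^2 \times \mathbb{R}^2$ and the sum function on it returns precisely $T_k \oplus S_{k+1}$. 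Absent this non-interactivity the identity can fail, which is why the hypothesis appears essentially in the statement.
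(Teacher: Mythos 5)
Your proposal is correct and follows essentially the same route as the paper's own proof: decompose the transform into per-index addends $\inte{x}_n w_{hn}$, recognise each as an interval diagonal (a two-vertex convex polytope in $\mathbb{C}$), identify $Z_h$ with the iterated Minkowski sum of these segments, and conclude convexity and the $2N$ vertex bound from Proposition \ref{prop:minkowski}. Your Step 2 and the ``main obstacle'' paragraph in fact make explicit the non-interactivity argument that the paper's proof leaves implicit when it replaces each interval addition by its Minkowski equivalent, which is a welcome tightening rather than a departure.
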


 \begin{proof}
 Let us expand the Fourier transform and consider its algebraically equivalent trigonometric expression: 
 $$\mathsf{f}_h(x) = \sum_{n=0}^{N-1} x_n \ \left(  \cos{\frac{2 \pi}{N} h n} - \textcolor{red}{i} \sin{\frac{2 \pi}{N} h n} \right).$$ 
 \noindent
 With the change of variables:
 $$u_{hn} = \cos{\frac{2 \pi}{N} h n}, \ \ \ \ v_{hn} = -\sin{\frac{2 \pi}{N} h n},$$
 \noindent
the transform becomes:
\begin{equation}\label{eq:expression2}
\mathsf{f}_h(x) = \sum_{n=0}^{N-1} x_n  u_{hn} + \textcolor{red}{i}\ x_n v_{h n}.
\end{equation}
\noindent
Let us now consider the interval extension of \eqref{eq:expression2}: $\mathsf{F}_h(\inte{x}) = \sum_{n=0}^{N-1} \inte{x}_n  u_{hn} + \textcolor{red}{i}\ \inte{x}_n v_{h n}$.  Each addend of such extension is the interval diagonal $_{uv}\inte{x}_{hn} \in \mathbb{C}$, given by: 
$$_{uv}\inte{x}_{hn} := \inte{x}_n  u_{hn} + \textcolor{red}{i}\ \inte{x}_n v_{h n}.$$
 \noindent
Each interval diagonal  $_{uv}\inte{x}_{hn},\ n=0,1,...,N-1$ is a convex polytope in $\mathbb{C}$ with two vertices.
 \noindent
 Substituting the interval diagonals $_{uv}\inte{x}_{hn}$ back into expression \eqref{eq:expression2}, the interval extension of the discrete Fourier transform becomes:
 \begin{align*}
 \mathsf{F}_h(\inte{x}) & = \sum_{n=0}^{N-1}\ _{uv}\inte{x}_{hn}\\
 									   & = (\cdots((_{uv}\inte{x}_{h0} \ +\ _{uv}\inte{x}_{h1}) \ +\ _{uv}\inte{x}_{h2})  \ +...+\ _{uv}\inte{x}_{h(N-1)}).
 \end{align*}
 \noindent
 Now, because each interval diagonal is a convex subset of $\mathbb{C}$, we can replace each addition by its Minkowski equivalent:
 \begin{equation}\label{eq:minkowski_series}
 Z_h = \mathcal{F}_h(\inte{x}) = (\cdots((_{uv}\inte{x}_{h0} \ \oplus \ _{uv}\inte{x}_{h1}) \  \oplus \ _{uv}\inte{x}_{h2})  \  \oplus ...  \oplus  \ _{uv}\inte{x}_{h(N-1)}).
 \end{equation}
 \noindent
Because the sum between two diagonals is a convex set, and the sum of a convex set and a diagonal is again a convex set, by invoking Proposition \ref{prop:minkowski}, it holds that the series of nested Minkowski sums  of \eqref{eq:minkowski_series} yields the united set $Z_h$, which concludes the proof.
% Thus the united set $Z_h$ can be computed as a series of chained Minkowski additions.
 %whose number of vertices is at most $2N$, which concludes the proof. 
 \end{proof}

 \begin{remark}
 Note that the proof of Theorem \ref{theor:main} implies that the dependence encoded in the diagonal is fully preserved, and so the dependency problem of interval computations is fully addressed.  The  resulting united set $Z_h$ is therefore a convex polytope in $\mathbb{C}$, whose vertices can be computed by repeatedly applying the Minkowski sum in \eqref{eq:minkowski_series}.
 \end{remark}
 
 \begin{corollary}(Number of vertices)\\
 Let $Z_h$ be he united set of \eqref{eq:minkowski_series}, then $Z_h$ is convex and  has at most $2N$ vertices.
 \end{corollary}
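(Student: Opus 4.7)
The plan is a straightforward induction on the number of summands, using Theorem \ref{theor:main} to express $Z_h$ as a chain of Minkowski additions and Proposition \ref{prop:minkowski} to control the vertex count at each step.

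First I would recall that, by Theorem \ref{theor:main}, the united set admits the representation
\[
Z_h \;=\; (\cdots((_{uv}\inte{x}_{h0} \,\oplus\, {}_{uv}\inte{x}_{h1}) \,\oplus\, {}_{uv}\inte{x}_{h2}) \,\oplus \cdots \oplus\, {}_{uv}\inte{x}_{h(N-1)}),
\]
where each $_{uv}\inte{x}_{hn}$ is an oriented interval diagonal in $\mathbb{C}$, i.e.\ a convex polytope with exactly $2$ vertices. In particular $Z_h$ is convex, so it only remains to bound its number of vertices.

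Next I would introduce the partial sums $P_k := {}_{uv}\inte{x}_{h0} \oplus \cdots \oplus {}_{uv}\inte{x}_{hk}$ for $k=0,1,\dots,N-1$, and denote by $V(P_k)$ the number of vertices of $P_k$. Proposition \ref{prop:minkowski} applied in the plane $\mathbb{C}\cong\mathbb{R}^2$ gives $V(P_{k+1}) \le V(P_k) + V({}_{uv}\inte{x}_{h(k+1)}) = V(P_k) + 2$. A one-line induction starting from $V(P_0)=2$ then yields $V(P_k) \le 2(k+1)$, and taking $k = N-1$ produces $V(Z_h) = V(P_{N-1}) \le 2N$, as required.

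There is no real obstacle here: the only care needed is to verify the inductive step is legitimate, i.e.\ that Proposition \ref{prop:minkowski} applies to the \emph{sum} of a generic convex polygon $P_k$ with a $2$-vertex diagonal, which is exactly its hypothesis (two convex polygons in the plane). Convexity of each $P_k$ is preserved under Minkowski addition (as noted in the remark following Proposition \ref{prop:minkowski}), so the induction is well posed and the $2N$ bound follows immediately.
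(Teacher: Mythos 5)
Your proof is correct and follows essentially the same route as the paper, which simply invokes Proposition \ref{prop:minkowski} on each Minkowski addition in \eqref{eq:minkowski_series}; you have merely made explicit the induction on partial sums ($V(P_{k+1}) \le V(P_k) + 2$, starting from $V(P_0)=2$) that the paper leaves implicit.
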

 
 \begin{proof}
 The proof follows from Proposition \ref{prop:minkowski}, noticing that each sum in \eqref{eq:minkowski_series} is a Minkowski addition between convex sets.
 \end{proof}

 \section{Discussion}\label{sec:discussion}
This paper has set out the mathematical foundations upon which a polynomial-time algorithm can be built, in order to compute exact bounds on the amplitude and phase of the discrete Fourier transform, when presented with input uncertainty in the form of intervals. 
 
% First, some elements of interval computations are introduced to 

The main result states that the united set $Z_h \subset \mathbb{C}$ of the transform can be reached by chaining Minkowski additions. This provides a means to address the dependency problem of interval computations, whilst characterising the geometry of the united set. This result also establishes the convexity of such sets, which in turns makes it possible to obtain exact bounds on the amplitude and phase.  

Note from Definition \ref{def:argument} that when the united set at a particular harmonic contains the origin of the complex plane, the phase for such a harmonic is undefined. 

With a full characterization of the united set it is possible to pinpoint the configuration of endpoints in the original N-interval that leads to the bounds for each harmonic. There will be one configuration for the upper bound and one for the lower bound for each harmonic. This study can help the analysts identify some `critical' signals within the N-interval that lead to the upper bound of the amplitude, which can be useful when concerned with the energy content of the process. 
%nverse Fourier transform \\

It is important to note that an algorithm for the exact bounds on amplitude and phase can suffer additional dependency problems if used to perform further calculations involving the interval amplitude and/or the interval phase.  Therefore, such an algorithm must be used with caution, while understanding the limitations of interval computations.

%Further dependency problems. \\

Proposition \ref{prop:minkowski} induces an algorithm that can compute the vertices of a Minkowski addition in linear time. This is a very interesting prospect as it can lead to faster algorithms than the one presented in \cite{intervalfourier2020}. In fact, with a linear-time algorithm for the Minkowski addition, there is no longer need to compute a convex hull at each addition, bringing significantly down the cost of the algorithm.
%A faster algorithm.\\

Finally, a short remark about verified computations. In order for the computations presented in this paper to be verified and exact at the same time, additional theoretical work is needed. Verified computations will need to deal with interval Fourier coefficients, whose width is typically orders of magnitude smaller compared to the width of the input uncertainty. Because of the difference in magnitude, the exactness of the bounds could be relaxed to make room for rigour, however, to achieve verified computations a slightly modified implementation of the algorithm is needed.
%Verified computations. \\
 
 \section*{Acknowledgements}

Thanks to  Tania Gricel Benitez, Ander Gray, Scott Ferson, Marco Behrendt and Liam Comerford.
 
This research was funded by the Engineering \& Physical Sciences Research Council (EPSRC) with grant no. EP/R006768/1.

 \bibliographystyle{plain}
 \bibliography{biblio.bib}

\end{document}